\documentclass[11pt]{article}
\usepackage{amssymb}
\usepackage{amsmath}
\usepackage[top=1.2in, bottom=1.4in, left=1in, right=1in]{geometry}
\pagestyle{plain}
\usepackage{parskip}
 \usepackage[T1]{fontenc}
\usepackage{lmodern}
 \usepackage{centernot}
\usepackage{epsfig}
\usepackage{xcolor}
\usepackage{amsmath,amsthm,amsfonts,amssymb,cite,amscd}

\usepackage{setspace}

\usepackage{fullpage}

\setlength{\parskip}{0.2\baselineskip}
\usepackage{indentfirst}
\setlength{\parindent}{2em}

\usepackage{mathrsfs}
\usepackage{amsthm}
\usepackage{tikz}
\usepackage{mathtools}
\usepackage{calc}
\usepackage[symbol]{footmisc}
\usetikzlibrary{patterns,arrows,decorations.pathreplacing}
\usetikzlibrary{fadings}
\usepackage{pgfplots}
\usepackage{parskip}
\newtheorem{theorem}{Theorem} 
\newtheorem{lemma}[theorem]{Lemma}
 
\newtheorem{definition}{Definition}
\newtheorem{claim}{Claim}

\newtheorem{conjecture}{Conjecture}

\newtheorem{proposition}{Proposition}

\newcommand{\LL}{\mathcal{L}}

\newcommand{\floor}[1]{\left\lfloor{#1}\right\rfloor}
\newcommand{\ceil}[1]{\left\lceil{#1}\right\rceil}

\newcommand{\RNum}[1]{\uppercase\expandafter{\romannumeral #1\relax}}
\DeclareMathOperator{\ex}{ex}
\DeclareMathOperator{\EX}{EX}

\begin{document}
\title{A note on the Tur\'an number of disjoint union of wheels}

 \author{Chuanqi Xiao\thanks{Central European University, Budapest. email: \texttt{chuanqixm@gmail.com}}\and Oscar Zamora \thanks{Central European University, Budapest. Universidad de Costa Rica, San Jos\'e. email: oscarz93@yahoo.es}
} 
\date{}

\maketitle
\baselineskip=0.3in
\begin{abstract}
      The Tur\'an number of a graph $H$, $\ex(n,H)$, is the maximum number of edges in a
graph on $n$ vertices which does not have $H$ as a subgraph. A wheel $W_n$ is an $n$-vertex graph formed by connecting a single vertex to all vertices of a cycle $C_{n-1}$. Let $mW_{2k+1}$ denote the $m$ vertex-disjoint copies of $W_{2k+1}$. For sufficiently large $n$, we determine the  Tur\'an number and all extremal graphs for $mW_{2k+1}$. We also provide the Tur\'an number and all extremal graphs for $W^{h}:=\bigcup\limits^m_{i=1}W_{k_i}$ when $n$ is sufficiently large, where the number of even wheels is $h$ and $h>0$.
\end{abstract}

\section{Introduction}

In this paper, all graphs considered are undirected, finite and contain neither loops nor multiple edges. Let $G$ be such a graph, the vertex and edge set of $G$ is denoted by $V(G)$ and $E(G)$, the number of vertices and edges in $G$ by $v(G)$ and $e(G)$, respectively. We denote the neighborhood of $v$ in $G$ by $N_G(v)$, the degree of a vertex $v$ in $G$ by $d_G(v)$, the size of $N_G(v)$. Denote by $\chi(G)$ the chromatic number of graph $G$, $\delta(G)$ and $\Delta(G)$ the minimum degree and maximum degree in graph $G$, respectively. Denote by $mH$ the graph of the vertex-disjoint union of $m$ copies of the graph $H$. Two disjoint vertex sets $U$ and $W$ are completely joined in $G$ if $uw\in E(G)$ for all $u\in U$, $w\in W$. Denote by $G_1\bigotimes G_2$ the graph obtained from $G_1 \cup G_2$ and completely join $V(G_1)$ and $V(G_2)$, by $G[B]$ the subgraph of $G$ induced by the vertex set $B$.

The Tur\'an number of a graph $H$, $\ex(n,H)$, is the maximum number of edges in a graph on $n$ vertices which does not have $H$ as a subgraph ($H$-free). Denote by $\EX(n, H)$ the set of $H$-free graphs on $n$ vertices with $\ex(n, H)$ edges and call a graph in $\EX(n, H)$ an extremal graph for $H$.  A \textbf{\textit{wheel}} $W_n$ is a graph on $n$
vertices obtained from a $C_{n-1}$ by adding one vertex $v_0$ and joining $v_0$ to all vertices of the $C_{n-1}$.  We call a wheel on even (odd) vertices even (odd) wheel. In~\cite{DZI}, Dzido determined for $k\geq 3$ and $n\geq 6k-10$, $\ex(n,W_{2k})=\left\lfloor\frac{n^3}{3}\right\rfloor$. Yuan~\cite{YUAN} proved the Tur\'an number $\ex(n,W_{2k+1})$ for odd wheel when $n$ is sufficiently large. Motivated by these results we determin the Tur\'an number and characterize all extremal graphs for disjoint union of wheels. 

%  Let $n,k,m$ be positive integers, we define $f(n,k,m)$ as follows:  
% $$f(n,k,m) =\max\left\{{m-1 \choose 2}+ \floor{\frac{(k-1)n_0}{2}}+(n_0+m-1)(n-m+1)-n^2_0+1\right\}.$$
Denote by $P_n$ the path on $n$ vertices and $K_{a,b}$  the complete bipartite graph with $a$ and $b$ vertices in its color classes.  
Let $\mathcal{U}^{k-1}_{n}(P_{2k-1})$  be the class of $P_{2k-1}$-free, $(k-1)$-regular or nearly $(k-1)$-regular graphs on $n$ vertices. 

\begin{definition}
Let $\mathcal{K}^{t}_{n_1,n_2}\left(\mathcal{U}^{k-1}_{n_1}(P_{2k-1});P_2\right)=K_t\bigotimes \mathcal{K}_{n_1,n_2}\left(\mathcal{U}^{k-1}_{n_1}(P_{2k-1});P_2\right)$, $n_1\geq n_2\geq 2$ and $n_1+n_2=n-t$, where $\mathcal{K}_{n_1,n_2}\left(\mathcal{U}^{k-1}_{n_1}(P_{2k-1});P_2\right)$ denote the class of graphs obtained from a $K_{n_1,n_2}$ by embedding the larger partite set a graph from $\mathcal{U}^{k-1}_{n_1}(P_{2k-1})$ and  embedding the smaller partite set an
edge. 
\end{definition}
\begin{theorem}\label{main1}
Let $mW_{2k+1}$ denote the $m$ vertex-disjoint copies of $W_{2k+1}$. For $n$ sufficiently large, $$\ex(n,mW_{2k+1})=\max\left\{{m-1 \choose 2}+ \floor{\frac{(k-1)n_0}{2}}+(n_0+m-1)(n-m+1)-n^2_0+1\right\}$$ 
and $\EX(n, mW_{2k+1})\subseteq \mathcal{K}^{m-1}_{n_1,n_2}\left(\mathcal{U}^{k-1}_{n_1}(P_{2k-1});P_2\right)$. 
\end{theorem}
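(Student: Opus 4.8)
The plan is to establish matching upper and lower bounds for $\ex(n,mW_{2k+1})$ and to characterise the extremal graphs, by induction on $m$ with Yuan's theorem~\cite{YUAN} as the base case $m=1$; throughout write $c(t)=\binom{t-1}{2}+(t-1)(n-t+1)+\ex(n-t+1,W_{2k+1})$, so that the asserted value is $c(m)$. \emph{Lower bound.} For each admissible $(n_1,n_2)$ with $n_1\ge n_2\ge 2$ and $n_1+n_2=n-m+1$, take $G=K_{m-1}\bigotimes G'$ with $G'\in\mathcal{K}_{n_1,n_2}(\U^{k-1}_{n_1}(P_{2k-1});P_2)$, whose part $A$ ($|A|=n_1$) carries a near-$(k-1)$-regular $P_{2k-1}$-free graph and whose part $B$ ($|B|=n_2$) carries a single edge. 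I would first check $G'$ is $W_{2k+1}$-free by locating a hypothetical hub: a hub in $A$ has at most $k-1$ neighbours inside $A$, while every $C_{2k}\subseteq A\cup B$ meets $A$ in at least $k$ vertices (since $G'[B]$ has at most one edge, at most one adjacent pair of the cycle's vertices lies in $B$); a hub $v$ in $B$ has $N_{G'}(v)\subseteq A\cup\{w\}$ for a single $w\in B$, and deleting $w$ from its $C_{2k}$ leaves a $P_{2k-1}$ in $G'[A]$. Then $G$ is $mW_{2k+1}$-free because $m$ pairwise disjoint copies of a $(2k+1)$-vertex graph cannot all meet the $(m-1)$-element set $V(K_{m-1})$, forcing one into $G'$. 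Edge-counting gives $e(G)=\binom{m-1}{2}+(n_1+m-1)(n-m+1)-n_1^2+\floor{(k-1)n_1/2}+1$; maximising over $n_1$ and invoking Yuan's identity $\max_{n_1}\bigl(n_1n_2+\floor{(k-1)n_1/2}+1\bigr)=\ex(n-m+1,W_{2k+1})$ (with $n_1+n_2=n-m+1$) yields $c(m)$, the stated value with $n_0=n_1$.

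\emph{Upper bound, inductive step.} One checks $c(t)-c(t-1)$ is of order $n$ and positive, so $c$ is strictly increasing on $\{1,\dots,m\}$ for $n$ large. Let $G\in\EX(n,mW_{2k+1})$. Since $e(G)\ge c(m)>c(m-1)=\ex(n,(m-1)W_{2k+1})$ (the equality by induction), $G$ contains $m-1$ pairwise disjoint copies of $W_{2k+1}$; let $U$ be their union, $|U|=(m-1)(2k+1)=O(1)$. Then $G-U$ is $W_{2k+1}$-free and, since at most $|U|n$ edges meet $U$, $e(G-U)\ge c(m)-|U|n=(\tfrac14-o(1))n^2$. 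As $\chi(W_{2k+1})=3$, the Erd\H{o}s--Simonovits stability theorem gives a partition $V(G)\setminus U=X\cup Y$ with $|X|,|Y|=(\tfrac12-o(1))n$ and $e(G[X])+e(G[Y])=o(n^2)$, which I would sharpen, using the precise structure of near-extremal $W_{2k+1}$-free graphs, so that $G[X]$ and $G[Y]$ are $P_{2k-1}$-free apart from $o(n)$ atypical vertices.

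\emph{Clean-up and conclusion.} Working on the extremal $G$, I would argue that any deviation from the structure of the construction either produces $m$ pairwise disjoint copies of $W_{2k+1}$ or loses edges, hence cannot occur. (i) A vertex with at least $\varepsilon n$ neighbours in each of $X$ and $Y$ is the hub of a $C_{2k}$ alternating through its neighbourhoods (a bipartite graph between linear-size classes missing only $o(n^2)$ edges contains $C_{2k}$); each such wheel uses only $O(1)$ vertices, so $m$ such vertices would give $m$ disjoint wheels, which is impossible; hence the set $S$ of such vertices satisfies $|S|\le m-1$. (ii) If $G-S$ contained a $W_{2k+1}$, its hub would lie in $X\cup Y$, have few neighbours on one of the two sides (not being in $S$), and then the refined structure of $G[X]$ and $G[Y]$ rules out a $C_{2k}$ in its neighbourhood exactly as in the verification of the construction; so $G-S$ is $W_{2k+1}$-free. (iii) Hence $e(G)=e(G[S])+e(S,V\setminus S)+e(G-S)\le c(|S|+1)$, and since $e(G)\ge c(m)$ with $c$ increasing we get $|S|=m-1$ and $e(G)=c(m)$, which matches the lower bound. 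Equality throughout forces $G[S]=K_{m-1}$, $S$ completely joined to $V\setminus S$, and $G-S\in\EX(n-m+1,W_{2k+1})\subseteq\mathcal{K}^{0}_{n_1,n_2}(\U^{k-1}_{n_1}(P_{2k-1});P_2)$ by Yuan's theorem, so $G=K_{m-1}\bigotimes(G-S)\in\mathcal{K}^{m-1}_{n_1,n_2}(\U^{k-1}_{n_1}(P_{2k-1});P_2)$.

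\emph{Main obstacle.} The crux is step (ii): proving $G-S$ is genuinely $W_{2k+1}$-free — equivalently that the two near-bipartition classes are really $P_{2k-1}$-free (one of them carrying at most one extra edge) — while absorbing the $o(n)$ atypical vertices without pushing $|S|$ above $m-1$. This requires a degree-aware refinement of the stability theorem for near-extremal $W_{2k+1}$-free graphs together with a somewhat delicate local analysis, and it is here that the hypothesis that $n$ is sufficiently large is really spent; the construction, the edge-counting, and the appeals to Yuan's theorem are comparatively routine.
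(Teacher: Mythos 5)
Your lower-bound construction and its verification are correct and match the paper's (the paper takes the $mW_{2k+1}$-freeness of $\mathcal{K}^{m-1}_{n_1,n_2}(\mathcal{U}^{k-1}_{n_1}(P_{2k-1});P_2)$ as clear), but the upper bound has a genuine gap exactly where you flag it, and the flag does not save the argument: step (ii) is the theorem, not a technicality. The specific failure is the $\varepsilon n$ threshold in the definition of $S$. A vertex $v\notin S$ may still have, say, $m(2k+1)$ neighbours in each of $X$ and $Y$ (a constant, far below $\varepsilon n$); since the bipartite graph between $X$ and $Y$ is nearly complete, such a $v$ will typically have a $K_{k,k}$, hence a $C_{2k}$, inside its neighbourhood and so be the hub of a wheel. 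Thus $G-S$ is in general \emph{not} $W_{2k+1}$-free, and (iii) collapses. If you instead define $S$ by a constant threshold, then $|S|\le m-1$ still follows from the disjoint-wheels argument, but now "nearly complete bipartite" gives you nothing: between two \emph{constant-size} neighbourhood sets, all $o(n^2)$ missing cross-edges could be concentrated, so you cannot conclude $v$ is a hub, nor that it is not one. This is precisely why the paper works with the condition that every set of at most $m(k+1)$ vertices on one side has at least $m(2k+1)$ common neighbours on the other (Lemma~\ref{mainl}), and extracts a genuine $K_{N,N}$ via the K\H ov\'ari--S\'os--Tur\'an theorem rather than relying on an $o(n^2)$-edit-distance statement. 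Separately, your claim that stability can be "sharpened" so that $G[X]$ and $G[Y]$ are $P_{2k-1}$-free apart from $o(n)$ atypical vertices is unproved and is essentially a stability version of Yuan's theorem that you would have to establish from scratch; and even granting it, $o(n)$ atypical vertices cannot be absorbed into $S$, whose size is capped at $m-1$ by your own counting in (iii).

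For contrast, the paper avoids stability altogether: it runs a progressive induction on $n$ (comparing $e(H_n)$ with the conjectured extremal value via a deficiency function $\varphi$, and showing $\varphi(n)<\max\{\varphi(n-1),\varphi(n-2N)\}$ unless $H_n$ already has the extremal structure), finds an exact $K_{N,N}$ by K\H ov\'ari--S\'os--Tur\'an, classifies the remaining vertices into $B_i\cup C_i$ and a bounded exceptional set $D$ using constant degree thresholds $m(2k+1)$, and finishes with Lemma~\ref{mainl}, which is proved by induction on $m$. Your outline is a reasonable alternative strategy in spirit, but as written the core step that would make it work is missing.
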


\section{Preliminary}
 %Let $\mathcal{K}_{n_1,n_2}\left(\mathcal{U}^{k-1}_{n_1}(P_{2k-1});\emptyset\right)$ denote the class of graphs obtained from a $K_{n_1,n_2}$ by embedding the larger partite set a graph from $\mathcal{U}^{k-1}_{n_1}(P_{2k-1})$.
  Clearly, the graphs in  $\mathcal{K}^{m-1}_{n_1,n_2}\left(\mathcal{U}^{k-1}_{n_1}(P_{2k-1});P_2\right)$ are $mW_{2k+1}$-free. In~\cite{YUAN}, Yuan showed the case when $m=1$.

%denote the class of graphs obtained by joining one single vertex to each vertex in the graph from the class $\mathcal{K}_{n_1,n_2}(\mathcal{U}^{k-1}_{n_1}(P_{2k-1});P_2)$.

 \begin{theorem}[Yuan,~\cite{YUAN}]\label{Y}
 Let $k\geq 2$ and $W_{2k+1}$ be a wheel on $2k+1$ vertices. Then for $n$ sufficiently large,
 \begin{align*}
 \ex(n,W_{2k+1})=
 \begin{cases}
 &\left(\left\lceil\frac{n}{2}\right\rceil+1\right)\left\lfloor\frac{n}{2}\right\rfloor, k=2,\\
&\max\left\{n_on_1+\left\lfloor\frac{(k-1)n_0}{2}\right\rfloor:n_0+n_1=n\right\}+1, k\geq3,
 \end{cases}
 \end{align*}
 and $\EX(n,W_{2k+1})\subseteq \mathcal{K}^{0}_{n_1,n_2}\left(\mathcal{U}^{k-1}_{n_1}(P_{2k-1});P_2\right)$.
 \end{theorem}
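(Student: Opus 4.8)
(I take the final statement to be Yuan's theorem, Theorem~\ref{Y}; the plan below is for $k\ge 3$, with a remark on the change needed for $k=2$.)

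\textbf{Lower bound.} First I would record that every member of $\mathcal{K}^{0}_{n_0,n_1}\!\left(\mathcal{U}^{k-1}_{n_0}(P_{2k-1});P_2\right)$ is $W_{2k+1}$-free, where the $(k-1)$-regular-or-near $P_{2k-1}$-free graph sits on the larger side $X$ ($|X|=n_0\ge n_1=|Y|$) and a single edge sits on $Y$; such graphs exist for large $n$ since a disjoint union of copies of $K_k$ (one component adjusted for parity) lies in $\mathcal{U}^{k-1}_{n_0}(P_{2k-1})$ and $n_1=n-n_0=\Theta(n)\ge 2$ at the optimal split. For $W_{2k+1}$-freeness: a hub in $Y$ has a neighbourhood inducing $G[X]$ plus at most one vertex joined to all of $X$, which has no $C_{2k}$ because $C_{2k}\supseteq P_{2k-1}$ and $G[X]$ is $P_{2k-1}$-free; a hub $v$ in $X$ has a neighbourhood inducing the $\le k-1$ vertices of $N_{G[X]}(v)$, all of $Y$, and every edge between, and a cycle visiting all $2k$ of these vertices alternates ``$X$-blocks'' and ``$Y$-blocks'' ($Y$-blocks have size $\le 2$, at most one of size $2$; each $X$-block uses a vertex of $N_{G[X]}(v)$), so such a cycle would need at least $k$ vertices of $N_{G[X]}(v)$ --- impossible. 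Evaluating $n_0n_1+\lfloor(k-1)n_0/2\rfloor+1$ and maximising over $n_0$ gives the claimed lower bound. (For $k=2$ one places a near-perfect matching on $Y$ instead, since the block-count there would require a $P_3=P_{2k-1}$ inside $Y$, which a matching avoids; this explains the different additive term.)

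\textbf{Upper bound via stability and cleaning.} Let $G$ be extremal. Then $e(G)\ge n^2/4+\Omega(n)>\lfloor n^2/4\rfloor$, so $G$ is non-bipartite, and since $\chi(W_{2k+1})=3$, Erd\H{o}s--Simonovits stability gives a near-balanced partition $V(G)=X\cup Y$ with $e(G[X])+e(G[Y])=o(n^2)$ and at most $o(n^2)$ non-edges between $X$ and $Y$. The plan is then to upgrade this by repeatedly exploiting $W_{2k+1}$-freeness. No vertex has $\ge k$ neighbours in each of $X$ and $Y$ (an alternating $C_{2k}$ through its neighbourhood would be a rim); since the cross non-edges are $o(n^2)$, all but $O(1)$ vertices have $\ge k$ neighbours on the opposite side, hence all but $O(1)$ vertices of $X$ have $<k$ neighbours inside $X$ and likewise for $Y$, giving, after reassigning the $O(1)$ exceptions, $e(G[X])\le(k-1)|X|/2+O(n)$, $e(G[Y])\le(k-1)|Y|/2+O(n)$ and only $O(n)$ cross non-edges. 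Next, a vertex with $k-1$ neighbours in its own part that also sees two edges in the opposite part among its neighbours there spans, via detours, a $C_{2k}$ in its neighbourhood; together with a comparison against the construction this forces one part --- call the other $A$, of size $n_0$, and this one $B$, of size $n_1\le n_0$ --- to carry at most one edge. Finally, a $P_{2k-1}$ in $G[A]$ closed to a $C_{2k}$ through a vertex of $B$ and hubbed by another vertex of $B$ is a wheel unless $B$ is edgeless, so $G[A]$ is $P_{2k-1}$-free (or, the less dense alternative, $e(G[B])=0$).

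\textbf{Pinning down the extremal graph.} It then remains to make the structure exact. Here I would use edge-maximality: every non-edge is critical, so adding it creates a $W_{2k+1}$; applied to a missing $A$--$B$ pair and combined with the structure above ($B$ nearly independent, $G[A]$ with $\Delta\le k-1$ and $P_{2k-1}$-free), the same block-count shows the supposed new wheel cannot occur, so $A$ and $B$ are completely joined. Hence $e(G)\le n_0n_1+\lfloor(k-1)n_0/2\rfloor+1\le\max_{n_0}\{\,n_0(n-n_0)+\lfloor(k-1)n_0/2\rfloor\,\}+1$; since the construction attains the right-hand side, equality holds throughout, forcing $A$ complete to $B$, $G[A]$ $(k-1)$-regular (parity permitting) and $P_{2k-1}$-free, and $G[B]$ exactly one edge --- i.e.\ $G\in\mathcal{K}^{0}_{n_0,n_1}\!\left(\mathcal{U}^{k-1}_{n_0}(P_{2k-1});P_2\right)$. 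This yields both the value of $\ex(n,W_{2k+1})$ and the characterisation $\EX(n,W_{2k+1})\subseteq\mathcal{K}^{0}_{n_0,n_1}(\ldots)$.

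\textbf{Main obstacle.} The genuinely hard part will be the passage from the $o(n^2)$-stability partition to the exact structure: controlling the $O(1)$ exceptional vertices (few neighbours across, or large within-degree), showing they can be reassigned or deleted without losing edges, and verifying --- via the block-count together with $P_{2k-1}$-freeness and $(k-1)$-regularity --- that no critical non-edge actually creates a $W_{2k+1}$, since otherwise the extremal graph need not have the claimed shape. The block-count itself must be executed carefully, as it is exactly where the numerology of $2k$ (the rim), $2k-1$ (the forbidden path), and $k-1$ (the within-degree cap) is forced; it is also what separates $k\ge 3$, where a single edge on the sparse side is all that is $W_{2k+1}$-safe and hence forced by maximality, from $k=2$, where a near-perfect matching there is safe and thus present in the extremal graph.
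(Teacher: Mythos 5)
This statement is quoted from Yuan's paper \cite{YUAN} and is not proved in the present paper, so there is no in-house argument to compare yours against; I can only assess your outline on its own terms. Your overall route (explicit construction for the lower bound, Erd\H{o}s--Simonovits stability plus a cleaning argument for the upper bound, then edge-maximality to pin down the exact structure) is the standard and, as far as I know, the correct strategy for this theorem, and your lower-bound verification via the block count on the rim $C_{2k}$ is sound for $k\geq 3$.

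However, what you have written is a plan rather than a proof, and the part you yourself flag as the ``main obstacle'' is precisely where the entire difficulty lives: passing from $e(G[X])+e(G[Y])=o(n^2)$ to the exact statement that one side induces a $(k-1)$-regular $P_{2k-1}$-free graph and the other side induces a single edge requires controlling the exceptional vertices, showing no edges are lost in reassignment, and running the critical-non-edge argument in full --- none of which is carried out. There is also a concrete problem in your $k=2$ remark: a near-perfect matching on $Y$ alone contributes roughly $n/4$ extra edges, whereas the stated value $\left(\left\lceil\frac{n}{2}\right\rceil+1\right)\left\lfloor\frac{n}{2}\right\rfloor$ exceeds $\left\lceil\frac{n}{2}\right\rceil\left\lfloor\frac{n}{2}\right\rfloor$ by about $n/2$; the $W_5$-extremal graph carries near-perfect matchings on \emph{both} sides, so your $k=2$ construction does not attain the claimed bound (and, incidentally, does not fit the family $\mathcal{K}^{0}_{n_1,n_2}\left(\mathcal{U}^{1}_{n_1}(P_{3});P_2\right)$ as literally defined --- a discrepancy already present in the paper's transcription of Yuan's theorem). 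So the proposal identifies the right skeleton but leaves the decisive steps, and the $k=2$ case, unresolved.
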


To prove Theorem~\ref{main1}, we use the technique of progressive induction. Essentially, the technique is as follows. For a given problem you are able to prove the inductive step
under the assumptions of the inductive hypothesis. 
However, you are unable to prove the anchor step (this could be because the anchor step is not true for small values). 
It also appears that the proof of the anchor step is as difficult as a direct proof of the result. 
Formally the statement we use is the following:

\begin{proposition} Let $c\in \mathbb{N}$ and $\varphi:\mathbb{N} \to \mathbb{Z}$ be a function such that  $\varphi(n) < \max\{\varphi(n-1),\varphi(n-c)\}$, then there exists $n_0 \in \mathbb{N}$ such that $\varphi(n) < 0$ for every $n>n_0.$
\end{proposition}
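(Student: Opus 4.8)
The plan is to extract a monotone quantity hidden behind the recursive inequality. The sequence $\varphi$ itself need not be monotone, so I would instead track the sliding-window maximum: for $n \ge c$ set $M(n) := \max\{\varphi(n-c+1), \varphi(n-c+2), \dots, \varphi(n)\}$. The first step is to check that $M$ is non-increasing (for $n$ large enough that the hypothesis applies). Indeed, $M(n)$ and $M(n-1)$ share the entries $\varphi(n-c+1), \dots, \varphi(n-1)$, each of which is at most $M(n-1)$ by definition; the one extra entry appearing in $M(n)$, namely $\varphi(n)$, satisfies $\varphi(n) < \max\{\varphi(n-1), \varphi(n-c)\} \le M(n-1)$ by hypothesis. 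Hence $M(n) \le M(n-1)$.

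Next I would argue by contradiction. If the conclusion fails, then $\varphi(m) \ge 0$ for arbitrarily large $m$; since $M(m) \ge \varphi(m)$ and $M$ is non-increasing, this forces $M(n) \ge 0$ for every $n$ in its domain. A non-increasing sequence of non-negative integers is eventually constant, so there are $L \ge 0$ and $N$ with $M(n) = L$ for all $n \ge N$. In particular $\varphi(n) \le L$ for all $n \ge N$, while $\varphi$ must still attain the value $L$ somewhere in every length-$c$ window past $N$, for otherwise $M$ would be strictly smaller than $L$ on that window.

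The final step is to pick an index $n$ large enough that $n - c \ge N$ and $\varphi(n) = L$ — such $n$ exists by the previous sentence — and apply the hypothesis at that $n$: $L = \varphi(n) < \max\{\varphi(n-1), \varphi(n-c)\} \le L$, which is absurd. This contradiction shows $\varphi$ is eventually negative, proving the proposition.

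As for the main obstacle: honestly there is no deep one here. The only genuine idea is to realize that $\varphi$ should be ``monotonized'' by passing to the windowed maximum rather than working with $\varphi$ directly; once $M$ is in hand, the anchoring ``eventually constant'' case is immediately self-contradictory. The remaining work is bookkeeping of ``sufficiently large'': the displayed inequality only makes sense when $n-c$ lies in the domain, so one should read the hypothesis as holding for all $n$ beyond some fixed threshold and thread that threshold through each ``eventually'' above — a bounded number of exceptional small values is harmless since the conclusion is only asymptotic.
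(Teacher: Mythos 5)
Your proof is correct. The paper states this proposition without any proof (it is the ``progressive induction'' lemma in the style of Simonovits), so there is nothing to compare against; your sliding-window maximum $M(n)=\max\{\varphi(n-c+1),\dots,\varphi(n)\}$ argument is complete and sound: $M$ is non-increasing because both $\varphi(n-1)$ and $\varphi(n-c)$ lie in the window defining $M(n-1)$, a non-increasing integer sequence that stays non-negative must stabilize at some $L\ge 0$, and applying the hypothesis at a late index where $\varphi$ attains $L$ yields $L<L$, a contradiction, so $\varphi$ is eventually negative.
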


Let $H_{n}$ be an extremal graph for $mW_{2k+1}$ and $f(n,t)=\max\left\{e(G):G\in \mathcal{K}^{t}_{n_1,n_2}\left(\mathcal{U}^{k-1}_{n_1}(P_{2k-1});P_2\right)\right\}$. To establish the result, in this paper, we define a function $\varphi(n)$, used to measure the ``distance
between our knowledge $e(H_n)$ and the conjecture $f(n,m-1)$'', that is $\varphi(n)=e(H_n)-f(n,m-1)$. Clearly, $\varphi(n)$ is non-negative, we then attempt to show that there exists $n_0$, when $n>n_0$, either $\varphi(n)<\varphi(n-1)$, $\varphi(n)<\varphi(n-c)$ (for some $c$ chosen later) or $H_n\in \mathcal{K}^{m-1}_{n_1,n_2}\left(\mathcal{U}^{k-1}_{n_1}(P_{2k-1});P_2\right)$.

\section{Disjoint copies of odd wheels}

We need the following theorem and key lemma to proof Theorem~\ref{main1}.
 \begin{theorem}[K\H ov\'ari-S\'os-Tur\'an,\cite{KST}]\label{KS}
 Let $K_{a,b}$ denote the complete
bipartite graph with $a$ and $b$ vertices in its color-classes. Then
$$\ex(n,K_{a,b})\leq \frac{\sqrt[a]{b-1}}{2}n^{2-\frac{1}{a}}+\frac{a-1}{2}n.$$
 \end{theorem}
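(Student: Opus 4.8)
The plan is to prove the bound by double counting the stars $K_{a,1}$, that is, the pairs $(S,v)$ where $S$ is an $a$-element set of vertices and $v$ is a vertex adjacent to every vertex of $S$. Let $G$ be a $K_{a,b}$-free graph on $n$ vertices with $e=e(G)$ edges, and write $d(v)$ for the degree of $v$. Counting these pairs by first fixing $v$, each vertex contributes exactly $\binom{d(v)}{a}$ of them (one for each $a$-subset of its neighbourhood), so the total is $\sum_{v}\binom{d(v)}{a}$. Counting instead by first fixing $S$, the contribution of $S$ is the number of common neighbours of the $a$ vertices of $S$; since $G$ contains no $K_{a,b}$, no $a$-set can have $b$ or more common neighbours, so each $S$ contributes at most $b-1$. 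This yields the central inequality $\sum_{v}\binom{d(v)}{a}\le (b-1)\binom{n}{a}$.

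The second step is to convert the left-hand side into a statement about the average degree $\bar d=\frac{1}{n}\sum_v d(v)=\frac{2e}{n}$ by convexity. The function $x\mapsto\binom{x}{a}$, extended to a convex function on $\mathbb{R}_{\ge 0}$ that agrees with the usual binomial coefficient at every nonnegative integer, is convex, so Jensen's inequality gives $\frac1n\sum_v\binom{d(v)}{a}\ge\binom{\bar d}{a}$. Combined with the first step this produces $n\binom{\bar d}{a}\le(b-1)\binom{n}{a}$.

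It remains to extract the stated closed form. I may assume $\bar d\ge a-1$, since otherwise $e=\frac{n\bar d}{2}\le\frac{(a-1)n}{2}$ is already dominated by the second term of the claimed bound and there is nothing to prove. Under this assumption I bound the two binomials in opposite directions: on one hand $\binom{\bar d}{a}=\frac{1}{a!}\prod_{i=0}^{a-1}(\bar d-i)\ge\frac{(\bar d-a+1)^{a}}{a!}$, since every factor is at least $\bar d-a+1\ge 0$; on the other hand $\binom{n}{a}\le\frac{n^{a}}{a!}$. Substituting and clearing $a!$ gives $n(\bar d-a+1)^{a}\le(b-1)n^{a}$, hence $\bar d-a+1\le(b-1)^{1/a}n^{1-1/a}$. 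Multiplying $\bar d\le(b-1)^{1/a}n^{1-1/a}+a-1$ by $n/2$ and using $2e=n\bar d$ yields precisely $e\le\frac{\sqrt[a]{b-1}}{2}n^{2-\frac1a}+\frac{a-1}{2}n$.

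The two countings and the final algebra are routine; the step needing the most care is the convexity argument, because $\binom{x}{a}$ is not convex as a raw polynomial on $[0,a-1]$ and many vertices may have degree below $a$, where $\binom{d(v)}{a}=0$. The fix is to work with the convex extension that equals $0$ on $[0,a-1]$ and the binomial polynomial thereafter, which agree at all integers, so that Jensen genuinely applies; this is also exactly why the boundary case $\bar d<a-1$ must be peeled off separately. Once that is handled the constants fall out in the claimed form with no slack beyond the elementary estimates $\binom{\bar d}{a}\ge\frac{(\bar d-a+1)^a}{a!}$ and $\binom{n}{a}\le\frac{n^a}{a!}$.
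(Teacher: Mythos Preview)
The paper does not supply its own proof of this theorem; it is quoted from \cite{KST} and used as a black box in the proof of Theorem~\ref{main1}. Your argument is the classical double-counting proof and is correct, including the care you take with the convexity of $x\mapsto\binom{x}{a}$: extending by $0$ on $[0,a-1]$, noting that the polynomial $\binom{x}{a}$ is convex on $[a-1,\infty)$ (all roots of its second derivative lie in $(0,a-1)$ by Rolle's theorem), and handling the boundary case $\bar d<a-1$ separately.
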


\begin{lemma} \label{mainl}

% Let $\epsilon>0$, then if $n_1$ is sufficiently large, and $n_2$ is such that $n_1 \leq n_2 \leq n_1(1+\epsilon)$, the following holds. 
% $G_1$ be and $n_1$ vertex graph and $G_2$ be $N$-vertex graphs if $G := G_1 \bigotimes G_2$ is $mW_{2k+1}$-free, then $e(G) \leq N^2 + (m-1)(N+m/2) +1$
Let $G$ be an $mW_{2k+1}$-free graph with a partition of the vertices into two nonempty parts $V(G) = V_1 \cup V_2$ with sizes $n_1$ and $n_2$ respectively such that $n_1 \geq n_2$ and $n_2$ is sufficiently large. 
Suppose $G$ is such, for each $i$ if $S \subseteq V_i$ has size at most $m(k+1)$ then all vertices in $S$ have at least $m(2k+1)$ common neighbors in the other class. 
Then, for $n_1$ sufficiently large, 
$e(G) \leq g(n_1,n_2,m),$ where $g(n_1,n_2,m)$ is defined as 
$$g(n_1,n_2,m) =\max\left\{e\left(\mathcal{K}^{m-1}_{n_1-j,n_2-(m-1-j)}(\mathcal{U}^{k-1}_{n_1-j}(P_{2k-1});P_2)\right):j=0,1,\dots,m-1\right\}.$$
Moreover, for $m>1$ equality can only hold if $G$ contains a vertex of degree $n_1+n_2-1$.
\end{lemma}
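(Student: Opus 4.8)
The plan is to induct on $m$, using Yuan's theorem (Theorem \ref{Y}) as the base case $m=1$, where the hypothesis ``every small vertex set has many common neighbors in the other part'' is not even needed because $\mathcal{U}^{k-1}$-type structure follows directly from the extremal characterization. For the inductive step, I would first locate, inside $G$, a single copy of $W_{2k+1}$ that is ``cheap'' to delete: because $n_2$ is large and every set of at most $m(k+1)$ vertices in $V_i$ has at least $m(2k+1)$ common neighbors across the partition, one can greedily build a wheel $W$ using, say, a hub and rim that live mostly on one side with the rim closed up using common neighbors on the other side — the point being that $W$ uses only a bounded number ($O_k(1)$) of vertices and the hypothesis guarantees enough room to embed it. Deleting $V(W)$ leaves a graph $G'$ on $n-(2k+1)$ vertices which is $(m-1)W_{2k+1}$-free and still satisfies the common-neighbor hypothesis with parameters for $m-1$ (since $m(k+1) > (m-1)(k+1)$ and $m(2k+1) > (m-1)(2k+1)$, and we removed only boundedly many vertices, so the common-neighbor counts drop by at most $O_k(1)$, still leaving $\ge (m-1)(2k+1)$ after adjusting constants). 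Then $e(G) \le e(G') + (\text{edges incident to } V(W)) \le g(n_1', n_2', m-1) + \big((2k+1)(n-1)\big)$ roughly, and I would compare this against $g(n_1,n_2,m)$.

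The heart of the argument is showing this crude bound is not good enough on its own and must be sharpened by a global structural dichotomy rather than pure induction — this is presumably where the phrase ``Moreover, for $m>1$ equality can only hold if $G$ contains a vertex of degree $n_1+n_2-1$'' comes from. Concretely, I would argue: if $G$ has no vertex of full degree $n_1+n_2-1$, then I can find a copy of $W_{2k+1}$ that is even cheaper — one can choose its hub to be a vertex of near-maximum degree but still miss some vertex $u$, and route the wheel so that the deleted set $V(W)$ has small total degree-sum into $G$, OR one shows that the ``defect'' (a non-neighbor of every high-degree vertex) can be exploited to recursively save at least one edge, giving strict inequality $e(G) < g(n_1,n_2,m)$. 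If, on the other hand, $G$ \emph{does} have a vertex $x$ of full degree, then $G - x$ is $(m-1)W_{2k+1}$-free (any $(m-1)W_{2k+1}$ in $G-x$ together with a wheel through $x$ — built from the common-neighbor hypothesis applied inside $G-x$, which still holds — would give $mW_{2k+1}$), so $e(G) \le (n-1) + \ex(n-1, (m-1)W_{2k+1}) \le (n-1) + g(n_1-1, n_2, m-1)$ or the symmetric split, and the recursion on $g$ closes exactly because the definition of $\mathcal{K}^{t}_{n_1,n_2}$ is built to absorb one more apex vertex into the $K_t$ factor. The arithmetic identity $g(n_1,n_2,m) = (n-1) + \max\{g(n_1-1,n_2,m-1), g(n_1,n_2-1,m-1)\}$ (for the relevant ranges) is what makes the induction go through, and verifying it is a routine but necessary computation with the formula $e(\mathcal{K}^{m-1}_{n_1-j,n_2-(m-1-j)}(\cdots))$.

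I would handle the case analysis on which side $x$ is removed from, and on the value of $j$ realizing the maximum in $g$, by noting that removing a full-degree vertex shifts $(n_1,n_2,m) \mapsto (n_1-1,n_2,m-1)$ or $(n_1,n_2-1,m-1)$ and correspondingly shifts the maximizing $j$ by at most one; since the parts must stay nonempty and $n_2$ large, all boundary cases are absorbed by the hypothesis that $n_1, n_2$ are sufficiently large. For the equality discussion, after establishing the dichotomy, equality in $e(G) \le g(n_1,n_2,m)$ forces equality at every step, in particular forces the existence of the full-degree vertex (ruling out the first horn of the dichotomy, where we got strict inequality), which is exactly the ``Moreover'' clause.

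The main obstacle I anticipate is the first horn of the dichotomy — proving the \emph{strict} inequality $e(G) < g(n_1,n_2,m)$ when no full-degree vertex exists. The crude ``delete a wheel, add back $(2k+1)(n-1)$ edges'' bound overshoots $g$ by a lot, so one genuinely needs to exploit the defect vertex (a common non-neighbor of the top-degree vertices) to show that the deleted wheel can be chosen to carry $o(n)$ fewer edges, or equivalently to run a more careful two-step deletion; making this quantitative and uniform in the (bounded) number of vertices of the wheel, while keeping the common-neighbor hypothesis valid for the residual graph, is the delicate part. I expect this is where the hypothesis ``every set of $\le m(k+1)$ vertices has $\ge m(2k+1)$ common neighbors'' is used most forcefully: it is precisely the slack needed to re-route wheels around any bounded obstruction.
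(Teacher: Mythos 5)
Your skeleton (induction on $m$, delete either a single vertex or a whole wheel, and close the recursion via the growth of $g$) matches the paper, and your second horn is essentially the paper's Case 1: if $x$ has full degree then $G-x$ is $(m-1)W_{2k+1}$-free and the identity $e\bigl(\mathcal{K}^{m-1}_{\cdot,\cdot}(\cdots)\bigr)=e\bigl(\mathcal{K}^{m-2}_{\cdot,\cdot}(\cdots)\bigr)+(n_1+n_2-1)$ absorbs the apex. But your first horn is a genuine gap, and you say so yourself: you do not prove the strict inequality when no full-degree vertex exists, and the route you sketch for it (exploit a common non-neighbor of the high-degree vertices to find a wheel carrying fewer edges) is not carried out and is not what makes the argument work. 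The dichotomy you chose (full-degree vertex versus none) is the wrong one: the paper instead splits on whether some vertex $u\in V_i$ has $d_{G_i}(u)\geq m(2k+1)$ \emph{within its own part}. If yes, that single vertex can be made the hub of a wheel avoiding any prescribed $(m-1)(2k+1)$ vertices (using its many within-part neighbors plus the common-neighbor hypothesis to close the rim), so $G-u$ is $(m-1)W_{2k+1}$-free and $e(G)\leq d_G(u)+g(n_1',n_2',m-1)\leq g(n_1,n_2,m)$, with equality forcing $d_G(u)=n_1+n_2-1$; this is the only place equality can occur, which is exactly the ``Moreover'' clause.

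If no such vertex exists, then \emph{every} vertex of $V_i$ has degree less than $n_{3-i}+m(2k+1)$, i.e.\ bounded by the size of the \emph{opposite} part plus a constant. This is the saving you were missing: deleting a wheel with $s$ vertices in $V_1$ and $t$ in $V_2$ then removes at most $sn_2+tn_1+(2k+1)^2m$ edges, not $(2k+1)(n-1)$. Against this one checks the three monotonicity bounds $g(x,y,m)\geq g(x,y,m-1)+y-k-m$, $g(x,y,m)\geq g(x-1,y,m)+y$, $g(x,y,m)\geq g(x,y-1,m)+x$, which combine to give $g(n_1-s,n_2-t,m-1)\leq g(n_1,n_2,m)-sn_2-tn_1-(n_2-k-m)+(2k+1)^2$. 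The deleted wheel's edge count exactly cancels the $sn_2+tn_1$ term, and the leftover $-(n_2-k-m)+O_{k,m}(1)$ is negative once $n_2>(2k+1)^2(m+1)+m+k$, giving strict inequality with no need to hunt for a ``cheaper'' wheel or a defect vertex. So the missing ingredient is not a more careful choice of wheel but the observation that the case hypothesis caps every degree by the opposite part's size, combined with the extra additive $n_2$ that $g$ gains when $m-1$ is incremented to $m$.
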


\begin{proof}

The proof will follow by induction on $m$, the case where $m=1$ is done by~\cite{YUAN}.

Clearly,  for $n_2 \geq m-1$ we have that $g(n_1,n_2,m) \leq f(n,m-1)$.
Now suppose that $m>1$, note that by the definition of $\mathcal{K}^{t}_{n_1,n_2}\left(\mathcal{U}^{k-1}_{n_1}(P_{2k-1});P_2\right)$  we have $$e\left(\mathcal{K}^{m-1}_{n_1-j,n_2-(m-1-j)}(\mathcal{U}^{k-1}_{n_1-j}(P_{2k-1});P_2)\right) = e\left(\mathcal{K}^{m-2}_{n_1-j,n_2-(m-1-j)}(\mathcal{U}^{k-1}_{n_1-j}(P_{2k-1});P_2)\right) + (n_1+n_2-1).$$

It follows from the definition that both $g(n_1-1,n_2,m-1)$ and $g(n_1,n_2-1,m-1)$ are bounded above by $g(n_1,n_2,m)-(n_1+n_2-1).$
% $$\max\left\{\binom{m-1}{2} + (m-1)(n_1+n_2-m+1) + (n_2-j)(n_1+j-m+1) + \floor{\frac{n_2-j)(k-1)}{2}}:j=0,1,\dots,m-1\right\}+1.$$
% $$\max\left\{(m-1)(n_1+j-\frac{m}{2}) + (n_2-j)(n_1+j) + \floor{\frac{(n_2-j)(k-1)}{2}}:j=0,\dots,m-1\right\}+1.$$

% Since \begin{align*}
% &(m-1)(n_1+j-\frac{m}{2}) + (n_2-j)(n_1+j) + \floor{\frac{(n_2-j)(k-1)}{2}}  \\ =&(n_1+n_2-1) + (m-2)(n_1+(j-1)-\frac{m-1}{2}) + (n_2-(j-1))(n_1+(j-1)) + \floor{\frac{(n_2-(j-1))(k-1)}{2}} \\=& (n_1+n_2-1) + (m-2)((n_1-1)+j-\frac{m-1}{2}) + (n_2-j)(n_1-1+j) + + \floor{\frac{(n_2-j)(k-1)}{2}}
% \end{align*}
  
% $$\max\left\{\binom{m-1}{2} + (m-1)(n_1'+n_2') + n_2'n_1' + \floor{\frac{n_2'(k-1)}{2}}:0\leq n_2-n_2' \leq m-1, n_1'+n_2' = n_1+n_2-m+1 \right\}+1.$$  
    
% $$\max\left\{n_1n_2 + jn_2 + (m-1-j)n_1 +\binom{j}{2}+ \binom{m-1-j}{2} + \floor{\frac{(n_2-j)(k-1)}{2}}:j=0,1,\dots,m-1\right\}+1.$$

Let $S_{n}$ denote the star on $n$ vertices and $G_i$ denote the subgraph of $G$ induced by the vertex set $V_i$. For a graph $H$, let $s_{k+1}(H)$ denote the maximum number of disjoint $S_{k+1}$ in $H$. From the conditions of $G$ we have that $s_{k+1}(G_1) + s_{k+1}(G_2) \leq m-1.$ 
We separate the proof into $2$ cases.

\textbf{Case 1}.\ \ For some $i$ there exists a vertex $u \in V_i$ such that $d_{G_i}(u) \geq m(2k+1)$.

Let $G'$ be the graph obtain from $G$ by removing $u$, then the vertex set of $G'$  can be decomposed into graphs $V_1' \cup V_2'$ of sizes $n_1'$ and $n_2'$. We have that $G'$ must be $(m-1)W_{2k+1}$-free, otherwise we may find another wheel with center $u$ which is disjoint from the previous $(m-1)W_{2k+1}$. Hence, by the induction hypothesis we have $e(G') \leq g(n_1',n_2',m-1)$ and so $$e(G) \leq d_G(u) + g(n_1',n_2',m-1) \leq n_1+n_2-1 + g(n_1',n_2',m-1) \leq g(n_1,n_2,m-1),$$ 
 the equality holds only when $d_G(u) = n_1+n_2-1$.

\textbf{Case 2}.\ \ For each vertex $v \in V_i$ ($i=1,2$), $d_{G_i}(v) < m(2k+1)$.

Then we have that $d(v) < n_2 + m(2k+1)$ for $v \in V_1$ while $d(v) < n_1+m(2k+1)$ for $v \in V_2$. We may assume by induction that $G$ contains at least one wheel $W$, say with vertices $a_1,a_2,\dots,a_s$ in $V_1$ and $b_1,\dots,b_{t}$ in $V_2$, where $s+t = 2k+1$. Then $G'$, defined as the graph obtain by $G$ by removing $W$, can be decomposed in components $V_1'$ and $V_2'$ of sizes $n_1-s, n_2-t$ respectively, then \begin{equation} \label{gap}
e(G) \leq sn_2 + tn_1 + (2k+1)^2m + g(n_1-s,n_2-t,m-1). %< g(n_1,n_2,m).$$
\end{equation}

Note that by the construction of $G$ we have the following bounds 
\begin{align*}
g(x,y,m)& \geq g(x,y,m-1) + \min\{y,x-k\}-m \geq  g(x,y,m-1) + y-k-m \\
g(x,y,m)& \geq g(x-1,y,m) +y\\
g(x,y,m)& \geq g(x,y-1,m) + x    
\end{align*}

The first bound is obtained by the difference between the number of edges of the graphs in the definition of $g$, that is comparing the number of edges of $\mathcal{K}^{m-1}_{n_1-j,n_2-(m-1-j)}\left(\mathcal{U}^{k-1}_{n_1-j}(P_{2k-1});P_2\right)$
with $\mathcal{K}^{m-2}_{n_1-j+1,n_2-(m-1-j)}\left(\mathcal{U}^{k-1}_{n_1-j}(P_{2k-1});P_2\right)$ (when $j\geq 1$) or $\mathcal{K}^{m-2}_{n_1-j,n_2+1-(m-1-j)}\left(\mathcal{U}^{k-1}_{n_1-j}(P_{2k-1});P_2\right)$ (when $j\leq m-2$).

As a consequence of these bounds it follows that 
\begin{align*}
g(n_1-s,n_2-t,m-1) \leq g(n_1,n_2,m) - sn_2 - tn_1 - (n_2-k-m) + (2k+1)^2
    \end{align*}
Hence together with equation \eqref{gap} it follow that $$e(G) \leq  g(n_1,n_2,m) +  (2k+1)^2(m+1) - (n_2-k-m),$$ 
then $e(G)<g(n_1,n_2,m)$ if \begin{displaymath} n_2 > (2k+1)^2(m+1) +m+k.\qedhere \end{displaymath}
    \end{proof}

\begin{lemma}[Yuan, \cite{YUAN}]\label{Y1}
Let $n\geq 2k$, then $\ex(n,\{S_{k+1},P_{2k+1}\})=\floor{\frac{(k-1)n}{2}}$.
\end{lemma}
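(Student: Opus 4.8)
The plan is to treat the two bounds separately, with the upper bound being essentially free. First I would observe that $e(G)\le\floor{\frac{(k-1)n}{2}}$ follows from the forbidden star alone: an $S_{k+1}$-free graph $G$ has no vertex of degree at least $k$, so $\Delta(G)\le k-1$, whence $e(G)=\tfrac12\sum_{v\in V(G)}d_G(v)\le\tfrac{(k-1)n}{2}$, and integrality gives $e(G)\le\floor{\frac{(k-1)n}{2}}$. Thus the real content of the lemma is that adding $P_{2k+1}$ to the list of forbidden subgraphs does not decrease the extremal number, which I would establish by exhibiting an extremal graph that is simultaneously $P_{2k+1}$-free.

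For the construction I would use a disjoint union of small cliques. Write $n=qk+r$ with $0\le r\le k-1$; since $n\ge 2k$ we get $q\ge 2$ (indeed $qk\ge 2k-r\ge k+1$ when $r\ge 1$). If $r=0$, let $G$ be $q$ disjoint copies of $K_k$. If $1\le r\le k-1$, let $G$ be $q-1$ disjoint copies of $K_k$ together with one further component $H$ on $k+r$ vertices that is as close to $(k-1)$-regular as possible, i.e. $(k-1)$-regular when $(k-1)(k+r)$ is even and otherwise having one vertex of degree $k-2$ and all others of degree $k-1$. Such an $H$ exists since $k+r\ge k+1>k-1$ and the prescribed degree sum is even (Erd\H{o}s--Gallai). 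By construction $\Delta(G)\le k-1$, so $G$ is $S_{k+1}$-free; and every component of $G$ has at most $k+r\le 2k-1$ vertices, hence $G$ contains no path on $2k$ vertices and in particular no $P_{2k+1}$. A short count, using that $(k-1)k$ and $(k-1)qk$ are even, shows $e(G)=(q-1)\binom{k}{2}+e(H)=\floor{\frac{(k-1)n}{2}}$ when $r\ge 1$, and $e(G)=q\binom{k}{2}=\floor{\frac{(k-1)n}{2}}$ when $r=0$. Together with the upper bound this gives the claimed equality.

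I do not expect a genuine obstacle in this argument; the only places calling for a little care are the parity and divisibility bookkeeping in the final edge count and the standard verification that the near-regular remainder graph $H$ on $k+r$ vertices exists. Everything else is immediate from the degree bound and the fact that all components are small.
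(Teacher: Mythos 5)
Your argument is correct. Note that the paper itself gives no proof of this lemma --- it is imported verbatim from Yuan's paper \cite{YUAN} --- so there is nothing internal to compare against; what you have written is a complete, self-contained justification. The upper bound is indeed free from the star alone ($S_{k+1}$ is the star on $k+1$ vertices, so $S_{k+1}$-freeness is exactly $\Delta(G)\le k-1$, and forbidding $P_{2k+1}$ in addition can only lower the extremal number), and your lower-bound construction --- $q-1$ copies of $K_k$ plus one near-$(k-1)$-regular remainder on $k+r\le 2k-1$ vertices --- is $P_{2k+1}$-free for the right reason (every component has fewer than $2k+1$ vertices) and achieves the floor because $(k-1)(q-1)k$ is even, so the only floor loss occurs in the remainder component, exactly matching $\floor{(k-1)n/2}$. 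The existence of the near-regular graph $H$ follows as you say from Erd\H{o}s--Gallai (or just from the standard fact that a sequence $(k-1,\dots,k-1,k-2)$ with even sum and maximum degree less than the number of vertices is graphical). This construction is also consonant with how the lemma is used in the paper, where the extremal configurations are the classes $\mathcal{U}^{k-1}_{n}(P_{2k-1})$ of nearly $(k-1)$-regular path-free graphs.
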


\begin{proof}[Proof of Theorem~\ref{main1}]
We proof Theorem~\ref{main1} using the progressive induction.
Let $n$ be large enough and $H_n$ be an $n$-vertex $mW_{2k+1}$-free graph with maximal number of edges. We will also assume by induction that Theorem~\ref{main1} holds for $m-1$, the base case $m=1$ is done by~\cite{YUAN}. The following proof is based on Yuan's result.

Since  $e(H_n) >\floor{\frac{n^2}{4}}$, by Theorem~\ref{KS}, there exists $n_1$ such that when $n>n_1$, $H_n$ contains $K_{N,N}$ as a subgraph, for some large and even $N$.
Let $B_1$ and $B_2$ be the bipartite classes of $K_{N,N}$.
%Since $H_{n}$ does not contain $mW_{2k+1}$ as a subgraph, we have at most $m-1$ vertices in $K_{N,N}$ with degree at least $mk$ inside the graph induced by the class which it belongs.
Let $\hat{H}_{2N}$ be the graph induced by the vertex set $B_1\cup B_2$, $\Tilde{H}_{n-2N}$ be the graph induced by the vertex set $V(H_n)\setminus (B_1\cup B_2)$ and $e_{H}$ be the number of edges between $\hat{H}_{2N}$ and $\Tilde{H}_{n-2N}$. 
Thus, $$e(H_n)=e(\hat{H}_{2N})+e_{H}+e(\Tilde{H}_{n-2N}).$$

Let $H^{'}_{n}$ be a graph in $\mathcal{K}^{m-1}_{n_1,n_2}\left(\mathcal{U}^{k-1}_{n_1}(P_{2k-1});P_2\right)$, by Lemma \ref{Y1}, there exists a graph $H^{'}_{n}$ such that $K^*_{N,N} \subseteq H^{'}_n$, for some $K^*_{N,N} \in \mathcal{K}_{N,N}\left(\mathcal{U}^{k-1}_{N}(P_{2k-1});\emptyset\right).$ 
Let $H^{'}_{n-2N}$ be the graph induced by the vertex set $V(H^{'}_{n})\setminus V\left(K^*_{N,N}\right)$ and $e_{H^{'}}$  be the number of edges joining $K^*_{N,N}$ and $H^{'}_{n-2N}$. 
Thus, $$e(H^{'}_{n})=e\left(K^*_{N,N}\right)+e_{H^{'}}+e(H^{'}_{n-2N}).$$
Clearly, $e_{H'}=(n-2N)N+(m-1)N=(n-2N+m-1)N$. 

By Lemma~\ref{mainl}, we see $e(\hat{H}_{2N}) \leq g(N,N,m).$ %-e\left(K_{N,N}(\mathcal{U}^{k-1}_{N}(P_{2k-1});\emptyset)\right)\leq (m-1)(N-m+1)+{m-1\choose 2}+1$.
Therefore, we have 
\begin{align}\label{n-2N}
    \varphi(n)
    &=e(H_{n})-e(H^{'}_{n}) \nonumber\\
    &=e(\hat{H}_{2N})-e\left(K^*_{N,N}\right)+e_{H}-e_{H^{'}}+e(\Tilde{H}_{n-2N})-e(H^{'}_{n-2N})\nonumber\\ 
    &\leq g(N,N,m) -N^2 - \frac{N(k-1)}{2} +(e_{H}-e_{H^{'}})+\varphi(n-2N)\nonumber\\ 
    &\leq mN +(e_{H}-e_{H^{'}})+\varphi(n-2N)
\end{align}

Note that from \eqref{n-2N} we have that if $\varphi(n) \geq \varphi(n-2N)$ then $mN \geq e_{H'} - e_H$.

To complete the progressive induction, we are going to show that for $n$ large enough, either $\varphi(n)<\varphi(n-2N)$ or $\varphi(n)<\varphi(n-1)$ or $H_n\in \mathcal{K}^{m-1}_{n_1,n_2}\left(\mathcal{U}^{k-1}_{n_1}(P_{2k-1});P_2\right)$.

\textbf{Case 1}.\ \ There exists a vertex $v\in H_n$ with $d_{H_{n}}(v)<\frac{n}{2}$.

Since $e(H^{'}_n)=f(n,k-1)=\max\left\{{m-1 \choose 2}+ \floor{\frac{(k-1)n_0}{2}}+(n_0+m-1)(n-m+1)-n^2_0+1\right\}$ where $n_0=\frac{1}{2}\left(\floor{\frac{k-1}{2}}+n-m+1\right)$ or $n_0=\frac{1}{2}\left(\ceil{\frac{k-1}{2}}+n-m+1\right)$, we get $e(H^{'}_n)-e(H^{'}_{n-1})=f(n,k-1)-f(n-1,k-1)\geq \frac{n}{2}$. Clearly, $H_n-v$ is an $(n-1)$-vertex $mW_{2k+1}$-free graph which implies that $e(H_n)-d_{H_{n}}(v)\leq e(H_{n-1})$. 
Hence, $e(H_n)- e(H_{n-1})\leq d_{H_{n}}(v)<\frac{n}{2}$ and we get $\varphi(n)=e(H_n)-e(H^{'}_{n})<e(H_{n-1})-e(H^{'}_{n-1})=\varphi(n-1)$.

In Case $2$ we will assume that neither $\varphi(n)<\varphi(n-2N)$ nor $\varphi(n)<\varphi(n-1)$ hold.%, we are going to show that  $H_n\in \mathcal{K}^{m-1}_{n_1,n_2}\left(\mathcal{U}^{k-1}_{n_1}(P_{2k-1});P_2\right)$.

\textbf{Case 2}.\ \  $\delta(H_n)\geq \frac{n}{2}$ and  $\varphi(n)\geq\varphi(n-2N)$. With the following claims we are able to show that $H_n\in \mathcal{K}^{m-1}_{n_1,n_2}\left(\mathcal{U}^{k-1}_{n_1}(P_{2k-1});P_2\right)$ in this case.

% \begin{align}
%     \varphi(n)
%     &=e(H_{n})-e(H^{'}_{n}) \nonumber\\
%     &=e(\hat{H}_{2N})-e\left(K_{N,N}(\mathcal{U}^{k-1}_{N}(P_{2k-1});\emptyset)\right)+e_{H}-e_{H^{'}}+e(\Tilde{H}_{n-2N})-e(H^{'}_{n-2N})\nonumber\\ 
%     &\leq (m-1)(N-m+1)+{m-1\choose 2}+1+(e_{H}-e_{H^{'}})+\varphi(n-2N)
% \end{align}

\begin{claim}\label{bigdegree} Let $x$ be a vertex in $H_n$ such that $K_{m(2k+1),m(2k+1)}$ is contained in the neighborhood of $x$, then $G'$, the graph induced by $V(H_n) \setminus\{v\}$, is $(m-1)W_{2k+1}$-free.
\end{claim}
\begin{proof}

Suppose by contradiction that $G'$ is not $(m-1)W_{2k+1}$-free, since a copy of $(m-1)W_{2k+1}$ contains $m(2k+1)$ vertices in $G'$, then we may find a copy of $K_{k,k}$ in the neighborhood of $x$ which does not contain any vertex of the given $(m-1)W_{2k+1}$ copy, then $v$ together with the copy of $K_{k,k}$ contains another copy of $W_{2k+1}$ which contradicts the fact that $H_n$ is $mW_{2k+1}$-free. 
\end{proof}

% We continue the proof of Theorem \ref{main1} by induction m. By \ref{Y}, we see that Theorem \ref{main1} holds for $m=1$. Suppose Theorem \ref{main1} is true  for $k\leq m-1$. 

Hence we may assume that for any vertex  $v\in V(H_n)$, there is an index $i(v) \in \{1,2\}$ such that $v$ has less than $m(2k+1)$ neighbors in $B_{i(v)}$, since otherwise we would be able to find a copy of $K_{m(2k+1),m(2k+1)}$ in the neighborhood of $v$, and then by Claim~\ref{bigdegree} and induction on $m$, we would have that $$e(H_n) \leq (n-1) + e(G[V(H_n)\setminus\{v\}]) \leq (n-1) + f(n-1,m-2) = f(n,m-1).$$
 where the equality holds only if $d_{H_n}(v) = n-1$ and the graph induce by $V(H_n)\setminus\{v\}$ is in $\mathcal{K}^{m-2}_{n_1',n_2'}\left(\mathcal{U}^{k-1}_{n_1}(P_{2k-1});P_2\right)$ for some $n_1'+n_2' = n-1$. Therefore, by adding a full degree vertex to the previous graph with have that the equality holds only when $H_n  \in \mathcal{K}^{m-1}_{n_1,n_2}\left(\mathcal{U}^{k-1}_{n_1}(P_{2k-1});P_2\right)$ for some $n_1$ and $n_2$ with $n_1 + n_2 = n$ which maximizing the number of edges.

We partition the vertices of $\Tilde{H}_{n-2N}$ into the following classes: $C_1$, $C_2$ and $D$ such that: $ C_i$ is the set of vertices $v$ such that $v$ is adjacent to less than $m(2k+1)$ vertices in $B_i$ and more than $N - 2m(2k+1)$ vertices of $B_{3-i}$ for $i=1,2$, $v\in D$ if $v$ is adjacent to at most $N- 2m(2k+1)$ vertices of both $B_1$ and $B_2$.

By the definition of $C_i$ we have that any $m(2k+1)+1$ vertices of $B_i \cup C_i$ have more than $N   2m(2k+1)\bigg(m(2k+1)+1\bigg) \geq m(2k+1)$ neighbors in $B_{3-i}$, hence we may assume that every vertex $x \in B_i \cup C_i$ has less than $m(2k+1)$ neighbors in $B_i \cup C_i$ or we would be done by Claim~\ref{bigdegree}.
% Let $X_i$ be the vertex subset of $B_i\cup C_i$ ($i=1,2$) such that for each vertex $v\in X_i$, $v$ is adjacent to at least $m(2k+1)$ vertices in $B_i\cup C_i$ and $X= X_1\cup X_2$. 

% Let $D_0\subseteq D$ be the vertex set such that each vertex $v\in D_0$, $v$ is adjacent to at least $mk$ vertices in both $B_1$ and $B_2$. 
% Then for each vertex $v\in D-D_0$, there exists an $i(v)$ such that $v$ is joint to less than $mk$ vertices of $B_{i(v)}$.
%%%%%%%%%%

% Let $|X_1\cup X_2|=s$. Since $K_{N,N}\subseteq \hat{H_n}$ and each vertex $v\in C_i$ is adjacent to at least $(1-\epsilon)N$ vertices in $B_{3-i}$, there are at least $s$ disjoint copies of $W_{2k+1}$ in the $H_n$ with vertex set contained in $\hat{H}_{n}\cup C_1\cup C_2$, where the center of thees wheels are belong to $X_1\cup X_2$. 
% Also, since each vertex $v\in X_0$ has at least $mk$ neighborhoods in both $B_1$ and $B_2$, we get at least $t$ disjoint copies of $W_{2k+1}$ with center vertex in $X_0$ in the induced subgraph of $H_n$ on vertex set $X_0\cup \hat{H}_{n}$, clearly, $s+t\leq m-1$.
%%%%%%%%------

%Suppose that there are $s$ vertices in $K_{N,N}$ with have degree at least $mk$ in the graph induced by the class which it belongs, then it is easy to check that there are at most $m-1-s$ vertices in $D$ which are adjacent to at least $mk$ vertices in both $B_1$ and $B_2$. Let $D_0\subseteq D$ be the vertex set such that each vertex $v\in D_0$, $v$ is adjacent to at least $mk$ vertices in both $B_1$ and $B_2$.

\begin{claim}
There exists a constant $N_1$ such that $|D|<N_1$.
\end{claim}
\begin{proof}

Recall that by definition every vertex in $C_i$ is adjacent to less than $m(2k+1)$ vertices of $B_i$ and 
% We see all vertices in $(B_i\cup C_i)-BC_i$ are join to less than $mk$ vertices in it. 
for each vertex $v\in D$, there exists an $i(v)$ such that $v$ is join to less than $m(2k+1)$ vertices of $B_{i(v)}$, we get that $v$ is join to less than $m(2k+1)+(N - 2m(2k+1))\leq N-m(2k+1)$ vertices of $\hat{H}_{2N}$. 
Therefore,
\begin{align*}
    e_H
    &=e(B_1,C_1)+e(B_2,C_2)+e(B_1,C_2)+e(B_2,C_1)+e(B_1\cup B_2,D)\\
    &\leq 2Nm(2k+1)+N(n-2N)-m(2k+1)|D|\\
    &=4Nmk+N(m+1)+N(n-2N+m-1)-m(2k+1)|D|
\end{align*}
Since $e_{H'}= N(n-2N+m-1)$, we have that $e_H\leq 4Nmk+N(m+1)+e_{H^{'}}-m(2k+1)|D|$. From inequality \eqref{n-2N} we have $$mN \geq e_{H^{'}}-e_{H}\geq mk|D| - 4Nmk-N(m+1),$$ 
hence $|D|< N\frac{4k+3}{k}=N_1$.
\end{proof}
\begin{claim}
$|B_i\cup C_i|=\frac{n}{2}+O(\sqrt{n})$.
\end{claim}

\begin{proof}
Since there exists an integer $N_1$ such that $|D|\leq N_1$, then the number of edges incidence with $D$ is $O(n)$. 
Since $\Delta(G[B_i\cup C_i])< m(2k+1)$, we see $e(G[B_1\cup C_1])+e(G[B_2\cup C_2])=O(n)$. 
%We already know that for each vertex $v\in C_i$, $v$ is adjacent to more than $(1-\epsilon)N$ vertices in $B_{3-i}$, 
Hence, after removing the edges in $G[B_1\cup C_1]$, $G[B_2\cup C_2]$ and the edges incidence with $D$, we obtain a bipartite graph on $\floor{\frac{n^2}{4}}-O(n)$ edges. 
Therefore, there exists a constant $N_2$ such that $\left||B_i\cup C_i|-\frac{n}{2}\right|\leq N_2\sqrt{n}$, hence, $|B_i\cup C_i|=\frac{n}{2}+O(\sqrt{n})$.
\end{proof}

% \begin{claim}
% There is a constant $N_3$ such that for any if $i\in \{1,2\}$ and $x \in B_i \cup C_i$, then the complement of $N(x)$ in $H_n \setminus (B_i \cup C_i)$ has size at most $N_3\sqrt{x}$.
% \end{claim}

% \begin{proof}
% Recall that $\abs{N(x) \cap  (B_i \cup C_i)} \leq m(2k+1)$, while by the minimum degree condition, we have that $n/2 \leq \abs{N(x)} = \abs{N(x) \cap  (B_i \cup C_i)} + \abs{N(x) \cap (H_n \setminus (B_i \cup C_i)}$, so it follows that $\abs{N(x) \cap (H_n \setminus (B_i \cup C_i)} \geq n/2 - m(2k+1)$. 
% Since $\abs{(H_n \setminus (B_i \cup C_i)} = n/2 + O(\sqrt{n})$ the claim follows.
% \end{proof}

% \begin{claim}
% Let $D_i\subseteq D$, such that each vertex $v\in D_i$ is join to less than $m(2k+1)$ vertices in $B_i\cup C_i$, $i=1,2$, then $D$ is the disjoint union of $D_1$ and $D_2$.
% \end{claim}

% \begin{claim}
% If Let $D_i\subseteq D$, such that each vertex $v\in D_i$ is join to less than $m(2k+1)$ vertices in $B_i\cup C_i$, $i=1,2$, then $D$ is the disjoint union of $D_1$ and $D_2$.
% \end{claim}

\begin{claim}\label{Dbigdegree} %If $x \in D$ is such that $x$ has at least $m(2k+1)$ neighbors in both $B_1\cup C_1$ and $B_2\cup C_2$, then the graph induced by $V(H_n) \setminus\{x\}$ is $(m-1)W_{2k+1}$-free.
$D=D_1\cup D_2$, where vertices in $D_i$ is adjacent to less than $m(2k+1)$ vertices of $B_i\cup C_i$.
\end{claim}

\begin{proof}
Let $v\in D$, then there exists an $j(v)$ such that $v$ is adjacent to at least $\frac{n}{6}$ vertices in $B_{j(v)}\cup C_{j(v)}$. 
Otherwise, $d_{H_{n}}(v)<N_1-1+2\frac{n}{6}<\frac{n}{2}$, which contradicts to the fact that $\delta(H_n)\geq \frac{n}{2}$. 
Hence, since each vertex $u\in B_i\cup C_i$ has more than $\frac{n}{2}-O(n)$ neighbors in $B_{3-i} \cup C_{3-i}$, if a vertex $v_0\in D$ is adjacent to at least $m(2k+1)$ vertices in $B_{3-j(v_0)} \cup C_{3-j(v_0)}$ we may find a copy of $K_{m(2k+1),m(2k+1)}$ and we would be able to apply Claim~\ref{bigdegree}.  Let $D_i\subseteq D$, such that each vertex $v\in D_i$ is adjacent to less than $m(2k+1)$ vertices in $B_i\cup C_i$, then $D$ is the disjoint union of $D_1$ and $D_2$.
\end{proof}

% and $B_2 \cup C_2$, then we can find a $W_{2k+1}$ with center $V_0$ in $H_n$ which implies that $H_n\setminus v_0$ is $(m-1)W_{2k+1}$-free. Hence, by the induction hypothesis, we get $e(H_n)\leq n-1+f(n-1,m-2)=f(n,m-1)$ and we are done. Therefore, without loss of generality, let $D_i\subseteq D$, such that each vertex $v\in D_i$ is adjacent to less than $m(2k+1)$ vertices in $B_i\cup C_i$ and adjacent to more than $\frac{n}{6}$ vertices of $B_{3-i}\cup C_{3-i}$, $i=1,2$, then $D$ is the disjoint union of $D_1$ and $D_2$.

Hence we may assume that every vertex $x \in D$ has less than $m(2k+1)$ neighbors in one of the classes $B_1 \cup C_1$ or $B_2 \cup C_2$, otherwise we would be done by induction.

%We already know that $v$ is adjacent to less than $m(2k+1)$ vertices of at least one of  $B_{1}\cup C_{1}$ or $B_2\cup C_2$. Without loss of generality, let $D_i\subseteq D$ such that for each vertex $v\in D_i$, $v$ is adjacent to less than $m(2k+1)$ vertices of $B_i\cup C_i$ and adjacent to at least $\frac{n}{6}$ vertices of $B_{3-i}\cup C_{3-i}$.
 
%to at most $mk-1$ vertices in $B_{i(v)}\cup C_{i(v)}$ and $v$ is join to  Otherwise, $d_{H_{n}}(v)<mk-1+O(\sqrt{n})+\frac{n}{3}<\frac{n}{2}$,  

Let $V_1=B_1\cup C_1\cup D_1$ and $V_2=B_2\cup C_2\cup D_2$, then $V_1$ and $V_2$ is a vertex partition of $H_n$ such that for any vertex set on $m(k+1)$ vertices in $V_i$ has at least $m(2k+1)$ common neighbors in $V_{3-i}$. 
Then by Lemma~\ref{mainl}, we get $e(H_n)\leq f(n,m-1)$, the equality holds only when $H_n$ contains a vertex $v$ of degree $n-1$. Therefore, $v$ would have at least $m(2k+1)$ neighbors in both $B_1$ and $B_2$, which is a contradiction. 
% Since by the induction hypothesis, the extremal graphs for $(m-1)W_{2k+1}$ is in the graph family $\mathcal{K}^{m-2}_{n_1,n_2} \left(\mathcal{U}^{k-1}_{n_1}(P_{2k-1});P_2\right)$, finally we have $H_n\in \mathcal{K}^{m-1}_{n_1,n_2}\left(\mathcal{U}^{k-1}_{n_1}(P_{2k-1});P_2\right)$.
%by induction that $H_n\in \mathcal{K}^{m-1}_{n_1,n_2}\left(\mathcal{U}^{k-1}_{n_1}(P_{2k-1});P_2\right)$.
\end{proof}

\section{Remarks and Open Problems}

We now considering the disjoint union of wheels of different sizes.
When there is an even wheel the following result holds.

The Tur\'an graph $T(n,p)$ is a complete multipartite graph formed by partitioning a set of $n$ vertices into $p$ subsets, with sizes as equal as possible, and connecting two vertices by an edge if and only if they belong to different subsets. Denote its size by $t(n,p)$.

\begin{theorem}\label{main2}
Let $W^h=\bigcup\limits^m_{i=1}W_{k_i}$ be a disjoint union of $m$ wheels and the number of even wheels is $h$, $(h\geq 1)$, then  for $n$ sufficiently large, $\ex(n,W^h)=\left\{{h-1 \choose 2}+ (h-1)(n-h+1)+t(n-h+1,3)\right\}$ and $\EX(n,W^{h})=K_{h-1}\bigotimes T(n-h+1,3)$.
\end{theorem}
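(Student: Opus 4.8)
The plan is to follow the same progressive-induction skeleton used for Theorem~\ref{main1}, but with the target extremal graph now being $K_{h-1}\bigotimes T(n-h+1,3)$ instead of the wheel-type construction. First I would verify that $K_{h-1}\bigotimes T(n-h+1,3)$ is indeed $W^h$-free: any copy of $W^h$ must contain $h$ vertex-disjoint even wheels, each even wheel $W_{2\ell}$ has chromatic number $4$ (the hub plus an odd cycle), so it cannot embed in a graph whose vertex set, after deleting the $h-1$ apex vertices, induces a $3$-partite graph; a counting/pigeonhole argument shows that at least one of the $h$ wheels would be forced to lie entirely (or with its hub) in $T(n-h+1,3)$, which is $3$-chromatic, a contradiction. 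This gives the lower bound $\ex(n,W^h)\ge \binom{h-1}{2}+(h-1)(n-h+1)+t(n-h+1,3)$.

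For the upper bound, let $H_n$ be an extremal $W^h$-free graph. Since $e(H_n)>\lfloor n^2/4\rfloor$ (the construction already beats this), Theorem~\ref{KS} again forces a large $K_{N,N}$, and more relevantly a large complete $3$-partite $K_{N,N,N}$ eventually — or one argues directly via the Erd\H os--Stone/chromatic-number heuristic that the ``bulk'' of $H_n$ behaves like a $3$-partite graph. I would set up $\varphi(n)=e(H_n)-g(n,h-1)$ where $g(n,t)=\binom{t}{2}+t(n-t)+t(n-t,3)$, show $\varphi(n)\ge 0$, and then run the case analysis: (Case 1) if some vertex $v$ has degree $<\tfrac n2$ then deleting it and using $g(n,h-1)-g(n-1,h-1)\ge \tfrac n2$ gives $\varphi(n)<\varphi(n-1)$; (Case 2) otherwise $\delta(H_n)\ge \tfrac n2$. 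In Case 2, the analogue of Claim~\ref{bigdegree} says: if a vertex $x$ has a $K_{t,t,t}$ (for suitable $t=t(h,k_i)$) in its neighborhood then deleting $x$ leaves a $W^{h-1}$-free graph — because $x$ plus a fresh $K_{\ell-1,\ell-1,\ell-1}$ in its neighborhood contains the largest even wheel $W_{k_i}$. Iterating, either some vertex has full degree $n-1$ (and we peel it off, reducing $h$ by one and invoking the induction hypothesis to conclude $H_n=K_{h-1}\bigotimes T(n-h+1,3)$), or no such vertex exists, in which case the remainder is ``essentially $3$-partite'': the key lemma to prove here is the $W^h$-free analogue of Lemma~\ref{mainl}, namely that a $W^h$-free graph with $h-1$ apex-like vertices removed and with the small-set-large-common-neighborhood property must have at most $t(n-h+1,3)$ edges among the non-apex vertices. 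That in turn reduces to the single-wheel fact $\ex(n',W_{2\ell})=\lfloor n'^2/3\rfloor$ with extremal graph $T(n',3)$, which is Dzido's theorem~\cite{DZI}.

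Concretely the step structure is: (i) lower-bound construction and $W^h$-freeness via chromatic number; (ii) state and prove the structural lemma (the even-wheel counterpart of Lemma~\ref{mainl}) by induction on $h$, with base case $h=1$ being Dzido's result; (iii) the big-neighborhood claim (counterpart of Claim~\ref{bigdegree}) using that $W_{k_i}$ contains a balanced complete tripartite graph on few vertices minus a hub; (iv) the $K_{N,N,N}$ extraction from $e(H_n)>\lfloor n^2/4\rfloor$ via Kővári--Sós--Turán plus Erd\H os--Stone-type supersaturation; (v) the partition of the leftover vertices into three ``near-parts'' plus a bounded ``defect set'' $D$ (bounding $|D|$ exactly as in the $D$-is-small claim, using $\delta(H_n)\ge n/2$); (vi) assembling everything through $\varphi$ and the progressive-induction Proposition to force $H_n\in K_{h-1}\bigotimes T(n-h+1,3)$, and finally checking the edge count to pin down that $T(n-h+1,3)$ (balanced tripartition) maximizes.

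The main obstacle I expect is step (ii)/(v): proving the structural lemma cleanly, i.e. that once the few ``apex'' vertices are stripped off, a $W^h$-free graph satisfying the common-neighborhood condition is genuinely close to $3$-partite with no more than $t(n',3)$ edges, and controlling the interaction between the three large near-parts and the bounded defect set $D$ so that the error terms are $O(n)$ and get absorbed by the $\Theta(n)$ gap coming from $g(n,h-1)-g(n-2N,h-1)$. A secondary subtlety is the exact optimization showing the balanced tripartition (and balanced distribution of the $h-1$ apex vertices, giving the $\binom{h-1}{2}$ term) is the unique maximizer among all graphs in $K_{h-1}\bigotimes T(\cdot,3)$ — this is a short but fiddly convexity computation analogous to the $n_0$-optimization in Theorem~\ref{main1}. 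Unlike the odd-wheel case, here the ``$P_{2k-1}$-free regular graph'' embedded in the large part disappears entirely, replaced by the clean $T(n',3)$, which should make the extremal-graph characterization tighter (equality rather than mere containment) — indeed the theorem asserts $\EX(n,W^h)=K_{h-1}\bigotimes T(n-h+1,3)$ exactly, so the final uniqueness argument must rule out any defect edges, which follows because any edge inside a part would, together with the rest, still be absorbable only if it created no new wheel, and a careful local analysis shows it always does for $n$ large.
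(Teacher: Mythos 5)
The paper does not prove Theorem~\ref{main2} by rebuilding the progressive-induction machinery at all: it observes that the theorem is an immediate application of Simonovits' theorem (Theorem~\ref{SIM1}). One checks that $\chi(W^h)=4$ (a disjoint union's chromatic number is the maximum over components, and an even wheel is hub plus odd cycle), so $p=3$; that deleting any $h-1$ vertices from $W^h$ leaves at least one even wheel intact and hence a graph of chromatic number $\geq 4$; and that deleting one suitable rim edge from each of the $h$ even wheels yields a $3$-colorable graph, so $s=h$. Simonovits' theorem then delivers both the value $\binom{h-1}{2}+(h-1)(n-h+1)+t(n-h+1,3)$ and the uniqueness of $K_{h-1}\bigotimes T(n-h+1,3)$ in one stroke. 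Your plan takes a genuinely different and far heavier route, and as written it is a program rather than a proof: you yourself flag the central structural lemma (your steps (ii) and (v)) as the main obstacle and do not supply it.

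Beyond incompleteness, there is a concrete gap in your proposed induction. Your base case ``$h=1$ is Dzido's result'' is not correct as stated: when $h=1$ the forbidden graph $W^1$ is still a disjoint union that may contain many odd wheels, so being $W^1$-free is weaker than being $W_{2\ell}$-free, and Dzido's theorem~\cite{DZI} gives only the lower bound $\ex(n,W^1)\geq t(n,3)$, not the matching upper bound or the uniqueness of $T(n,3)$. Establishing that a graph with more than $t(n,3)$ edges contains the whole disjoint union (not just the single even wheel) already requires the stability/decomposition argument that Simonovits' theorem packages for you. A secondary slip: extracting a $K_{N,N,N}$ requires edge density above $(\tfrac{1}{3}-\varepsilon)n^2$ via Erd\H{o}s--Stone-type arguments, not merely $e(H_n)>\lfloor n^2/4\rfloor$ with K\H{o}v\'ari--S\'os--Tur\'an; the density is available here, but the inference as you wrote it is not. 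In short, your outline could probably be completed, but it would amount to reproving a special case of Theorem~\ref{SIM1}, which the paper simply cites.
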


Theorem~\ref{main2} is a consequence of the following result of Simonovits~\cite{SIM}

\begin{theorem}[\textbf{Simonovits}\cite{SIM}]\label{SIM1}
Let $\LL$ be the family of forbidden graphs and $p=p(\LL) = \min\limits_{L\in\LL}\chi(\LL)-1$.  
If by omitting any $s-1$ vertices of any $L\in \LL$ we obtain a graph with chromatic number at least $p+1$,  but  by  omitting $s$ suitable  edges  of  some $L\in \LL$ we  get  a $p$-colorable graph, then $K_{s-1} \bigotimes T(n-s+1,p)$ is the unique extremal graph for $\LL$ when $n$ is sufficiently large.
\end{theorem}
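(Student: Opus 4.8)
The plan is to establish matching bounds and to identify the unique extremizer $U_n:=K_{s-1}\bigotimes T(n-s+1,p)$. First I would distill from the two hypotheses one clean structural fact. The vertex hypothesis says that deleting at most $s-1$ vertices from any $L\in\LL$ leaves chromatic number $\geq p+1$, while the edge hypothesis gives some $L_0\in\LL$ and an edge set $F$ with $|F|=s$ such that $L_0-F$ is $p$-colorable. If $C$ is a minimum vertex cover of $F$, then $L_0-C$ is an induced subgraph of $L_0-F$, hence $p$-colorable; by the vertex hypothesis this forces $|C|\geq s$, and since $|C|\leq|F|=s$ we get $|C|=s$. A graph with $s$ edges whose vertex cover number equals $s$ must be a matching, so $F$ is a matching of $s$ independent edges whose removal makes $L_0$ $p$-colorable. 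This matching is the engine of the whole argument.

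With it I would settle the lower bound by checking that $U_n$ is $\LL$-free. If some $L\in\LL$ embedded into $U_n$, its copy would meet the clique $K_{s-1}$ in a set $A$ with $|A|\leq s-1$; deleting $A$ leaves a subgraph inside $T(n-s+1,p)$, which is $p$-colorable, so $\chi(L-A)\leq p$. But deleting at most $s-1$ vertices keeps $\chi\geq p+1$ (extend $A$ to an $(s-1)$-set $A'$: then $\chi(L-A)\geq\chi(L-A')\geq p+1$), a contradiction. Hence $\ex(n,\LL)\geq e(U_n)=\binom{s-1}{2}+(s-1)(n-s+1)+t(n-s+1,p)$. Dually, the matching shows the clique cannot be enlarged: in $K_s\bigotimes T(N,p)$ with $N$ large one embeds $L_0$ by placing a fixed $p$-coloring of $L_0-F$ into the $p$ Turán parts and routing each matching edge that stays inside a part through a distinct universal vertex, using at most $s$ of them. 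Thus no $\LL$-free graph can contain $s$ vertices universal to a large complete $p$-partite remainder.

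For the upper bound I would combine the Erd\H os--Stone--Simonovits theorem, giving $\ex(n,\LL)=t(n,p)+o(n^2)$, with the stability theorem of Simonovits: every $\LL$-free graph with $t(n,p)-o(n^2)$ edges differs from $T(n,p)$ by $o(n^2)$ edges, so an extremal $G_n$ admits a partition $V_1\cup\dots\cup V_p$ into nearly balanced parts with only $o(n^2)$ edges inside parts and almost all cross-edges present. Introducing the decomposition family $\M=\M(\LL)$ of minimal graphs that complete some $L\in\LL$ when planted inside one class of a large $p$-partite host, the edges inside each $V_i$ must induce an $\M$-free graph, for otherwise a copy of some $L$ appears. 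I would then peel by symmetrization in the exact spirit of Lemma~\ref{mainl}: a near-universal vertex (adjacent to all but boundedly many vertices in at least two classes) can be removed, the residual graph avoiding the family with criticality parameter lowered by one, and the progressive-induction principle with comparison function $\varphi(n)=e(G_n)-e(U_n)$ reduces $n$ while peeling one universal vertex at a time.

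The crux, and the step I expect to be the main obstacle, is closing the exact count: showing the peeling halts after \emph{exactly} $s-1$ near-universal vertices and that the untouched remainder is precisely $T(n-s+1,p)$, with no internal edges and perfectly balanced parts. The cap of $s-1$ is the ``no $s$ universal vertices'' obstruction supplied by the matching $F$; the matching lower bound $s-1$ and the prohibition of any surviving internal edge come from vertex-criticality together with $\M$-freeness, since one edge inside a class, combined with the near-complete $p$-partite structure and the matching routing, reconstructs $L_0$. Balancedness and the precise value then follow by optimizing the edge count over partitions, exactly as in the computation of $f(n,m-1)$, with strict loss for any deviation giving uniqueness. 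The delicate part throughout is quantitative: controlling the $o(n^2)$ stability error tightly enough that the discrete optimization and the criticality arguments become exact for all large $n$, which is precisely what progressive induction is designed to deliver.
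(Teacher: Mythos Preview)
The paper does not prove Theorem~\ref{SIM1}; it is quoted from Simonovits~\cite{SIM} as a black box and used only to deduce Theorem~\ref{main2}. There is therefore no proof in the paper to compare your proposal against.

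For what it is worth, your outline is broadly the standard Simonovits route (stability plus decomposition family plus a criticality/symmetrization step), and your preliminary observation that the $s$ deleted edges must form a matching is correct and is exactly the structural fact that drives Simonovits's original argument. The lower-bound verification that $U_n$ is $\LL$-free is clean and complete. The upper-bound sketch, however, remains a sketch: the passage from ``near-universal'' vertices to genuinely universal ones, the exact edge count inside the classes via the decomposition family, and the final balancing all require real work that you have flagged but not carried out. If you want to turn this into a self-contained proof you should consult Simonovits's original paper, since filling in those steps is essentially reproducing his argument rather than anything specific to the present paper.
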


Let $k_1\geq  k_2\geq \dots\geq k_m$ be positive integers, it is easy to see that if the disjoint union of stars $\bigcup\limits^{m}_{i=1}S_{k_i+1}$,  is added to one class of $K_{n_0,n_1}$, the we would obtain a copy of $\bigcup\limits^m_{i=1}W_{2k_i+1}$. 

Base on  the following theorem, we propose a conjecture on the extremal number for $\bigcup\limits^m_{i=1}W_{2k_i+1}$. 
% Let $\bigcup\limits^m_{i=1}W_{2k_i+1}$ be a disjoint union of odd wheels with components order $2k_1+1, 2k_2+1, \dots, 2k_m+1$ where $k_1\geq k_2\geq \dots\geq k_m$ and $c$ be a constant. 

\begin{theorem}[Lidick\'y, Liu, Palmer \cite{LIU}]\label{LIU} 
Let $F=\bigcup\limits^{k}_{i=1}S^{i}$ be a star forest where $d_i$ is the maximum degree of $S^{i}$ and $d_1\geq d_2\geq\dots\geq d_k$. For $n$ sufficiently large, 
$$\ex(n,F)=\max\limits_{1\leq i\leq k}\left\{(i-1)(n-i-1)+{i-1\choose2}+\floor{\frac{d_i-1}{2}(n-i-1)}\right\}.$$
\end{theorem}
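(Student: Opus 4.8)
The plan is to establish matching bounds, proving the lower bound by an explicit family of constructions indexed by $i$, and the upper bound by induction on the number of stars $k$ via deletion of a vertex of maximum degree.

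For the lower bound, for each $1\le i\le k$ I would build a graph $G_i$ as follows: take a set $A$ of $i-1$ vertices forming a clique and completely joined to the remaining $n-i+1$ vertices, and on those remaining vertices place a graph $H$ with $\Delta(H)\le d_i-1$ that is as dense as possible (a $(d_i-1)$-regular or nearly regular graph). Counting edges gives $\binom{i-1}{2}+(i-1)(n-i+1)+\lfloor\frac{d_i-1}{2}(n-i+1)\rfloor$, which realizes the $i$-th term of the claimed maximum. The crux is that $G_i$ is $F$-free: call a star $S^j$ \emph{big} if $d_j\ge d_i$, so that $S^1,\dots,S^i$ are all big. In any embedding of $F$, each big star must occupy at least one vertex of $A$ --- if its center lies in $A$ this is immediate, and if its center lies in the other part then it has at most $\Delta(H)\le d_i-1$ leaves there, so at least $d_j-(d_i-1)\ge 1$ of its leaves lie in $A$. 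Since the (at least) $i$ big stars are vertex-disjoint, this forces $|A|\ge i$, contradicting $|A|=i-1$.

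For the upper bound I would induct on $k$; the base case $k=1$ is the elementary bound $\ex(n,K_{1,d_1})=\lfloor(d_1-1)n/2\rfloor$, which follows from $\Delta\le d_1-1$. For $k\ge 2$ let $G$ be extremal, let $v$ be a vertex of maximum degree $\Delta$, and set $F''=\bigcup_{j=2}^{k}S^j$. If $\Delta\le d_1-1$ then $e(G)\le\lfloor(d_1-1)n/2\rfloor$, the $i=1$ term. If instead $\Delta\ge v(F'')+d_1$, then $G-v$ must be $F''$-free: a copy of $F''$ in $G-v$ together with a star $S^1$ centered at $v$ (whose $\ge v(F'')+d_1$ neighbors leave room to choose $d_1$ leaves off that copy) would produce $F\subseteq G$. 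By the induction hypothesis $e(G-v)\le\ex(n-1,F'')$, and a direct computation shows $(n-1)+\ex(n-1,F'')=\max_{2\le i\le k}\{\text{$i$-th term}\}$, so $e(G)\le\Delta+e(G-v)\le\max_i\{\text{$i$-th term}\}$.

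The remaining and genuinely delicate case is the intermediate regime $d_1\le\Delta<v(F'')+d_1$, where $G$ has bounded maximum degree yet still carries vertices of degree $\ge d_1$. Here I would use a packing argument: if $A'=\{v:d_G(v)\ge d_1\}$ contained $k$ vertices with pairwise disjoint closed neighborhoods, centering all $k$ stars on them would recover $F$, so a maximal packing has size $\le k-1$, whence $A'$ is covered by boundedly many distance-$2$ balls and $|A'|\le C(F)$ for a constant $C(F)$. This already yields $e(G)\le\lfloor(d_1-1)n/2\rfloor+O(1)$. The main obstacle is upgrading this to the exact inequality: when $i=1$ is the (near-)maximizer --- precisely when $d_1$ exceeds $d_2$ by only a small amount, so that the gap between the first two terms is itself $O(1)$ rather than $\Theta(n)$ --- the additive slack must be eliminated by a finer local analysis. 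The natural route is to show that the presence of any medium-degree vertex $v$ lets one locate a copy of $F''$ disjoint from a star $S^1$ at $v$, using that $n$ is large and $G$ remains dense away from the bounded set $N[v]$; this contradicts $F$-freeness and forces $\Delta\le d_1-1$ after all. Carrying this out at the level of additive constants, rather than leading-order terms, is where the bulk of the effort will lie.
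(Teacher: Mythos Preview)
The paper does not prove this statement. Theorem~\ref{LIU} is quoted from Lidick\'y, Liu and Palmer \cite{LIU} and is used only to motivate the concluding conjecture; no argument for it appears anywhere in the text. So there is no ``paper's own proof'' to compare your proposal against.

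As for the proposal itself: your lower-bound construction and the verification that $G_i$ is $F$-free are correct, and your Cases~1 and~2 of the upper bound (small $\Delta$, and $\Delta\ge v(F'')+d_1$ followed by induction on $G-v$) are the standard opening moves. The genuine issue is your Case~3, and you are right to flag it. The fix you sketch, however, does not close the gap as stated. You propose that a single medium-degree vertex $v$ together with the residual density of $G\setminus N[v]$ should force a copy of $F''$ disjoint from a star $S^1$ at $v$. But ``$G$ remains dense away from $N[v]$'' here means only $e(G\setminus N[v])\ge \lfloor(d_1-1)n/2\rfloor-O(1)$, and when the maximum in the formula is attained at $i=1$ (which is exactly the regime you are worried about, e.g.\ $d_1\ge d_2+3$) this edge count can lie \emph{below} $\ex(n-O(1),F'')$ by a linear margin, so no copy of $F''$ is guaranteed. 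In other words, the very case where you need the argument is the case where the density hypothesis is too weak to supply it. The additive-constant slack must be removed by a different mechanism --- in the original \cite{LIU} this is done by a more careful greedy embedding rather than a pure density argument --- and your outline does not yet contain that mechanism.
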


\begin{conjecture}
Let $\bigcup\limits^m_{i=1}W_{2k_i+1}$ be a disjoint union of odd wheels with components order $2k_1+1, 2k_2+1, \dots, 2k_m+1$ where $k_1\geq k_2\geq \dots\geq k_m$. For $n$ sufficiently large, 
\begin{align*}
    &\ex(n,\bigcup\limits^m_{i=1}W_{2k_i+1})\\
    &=\max_{1\leq n_0 \leq n}\left\{n_0(n-n_0)+\ex(n_0,\bigcup\limits^{m}_{i=1}S_{k_i+1})+1\right\}\\
   &=\max_{\substack{1\leq i\leq m \\ 1\leq n_0 \leq n} }\left\{n_0(n-n_0)+(i-1)(n_0-i-1)+{i-1\choose 2}+\floor{\frac{k_i-1}{2}(n_0-i-1)}+1\right\}
\end{align*}
\end{conjecture}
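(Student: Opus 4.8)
The plan is to prove the first displayed equality; the second is then immediate, since substituting the star-forest formula of Theorem~\ref{LIU} (with each $S_{k_i+1}$ having maximum degree $d_i=k_i$) into $n_0(n-n_0)+\ex(n_0,\bigcup_{i=1}^m S_{k_i+1})+1$ yields term-by-term the second expression. Write $F:=\bigcup_{i=1}^m W_{2k_i+1}$. As always, I would prove matching lower and upper bounds, with the lower bound coming from an explicit construction and the upper bound from the stability-plus-progressive-induction scheme already developed for Theorem~\ref{main1}.

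For the lower bound I would take vertex classes $A,B$ with $|A|=n_0$, $|B|=n-n_0$, put all edges between $A$ and $B$, embed in $A$ a graph $G_0$ that is extremal $\bigcup_{i=1}^m S_{k_i+1}$-free and moreover chosen—exactly as in the $m=1$ case of Theorem~\ref{Y}—to avoid the long paths $P_{2k_i-1}$ that the wheels can exploit, and add a single edge $b_1b_2$ inside $B$. This has $n_0(n-n_0)+\ex(n_0,\bigcup_{i=1}^m S_{k_i+1})+1$ edges, and optimizing over $n_0$ gives the claimed value. The substance is $F$-freeness. Suppose a copy of $F$ sat inside this graph, and consider a wheel $W_{2k_i+1}$ with hub $h$ and rim $C_{2k_i}$. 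Since $B$ induces only the edge $b_1b_2$, the rim has at most one consecutive pair inside $B$, so at least $k_i-1$ rim vertices lie in $A$. If $h\in A$, every $A$-rim vertex must be joined to $h$ by an edge of $G_0$, producing a star $S_{k_i+1}$ in $G_0$; if $h\in B$, the $A$-rim vertices together with the rim edges force a path of length about $2k_i$ inside $G_0$. Over the $m$ disjoint wheels these structures are vertex-disjoint, so $G_0$ would contain $\bigcup_{i=1}^m S_{k_i+1}$ (or the excluded long paths), contradicting its choice—this is exactly the ``stars in one class plus the opposite class give $F$'' observation stated before Theorem~\ref{LIU}.

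For the upper bound I would mirror the proof of Theorem~\ref{main1}. Since $e(H_n)>\floor{\frac{n^2}{4}}$, Theorem~\ref{KS} gives a balanced $K_{N,N}\subseteq H_n$ with parts $B_1,B_2$. A ``big neighborhood'' claim in the spirit of Claim~\ref{bigdegree}, but using $K_{t,t}$ with $t=m(2k_1+1)$, lets me complete the \emph{largest} wheel $W_{2k_1+1}$ around any vertex whose neighborhood contains such a $K_{t,t}$; deleting that vertex then leaves a $\bigcup_{i=2}^m W_{2k_i+1}$-free graph and I induct on the multiset $\{k_1,\dots,k_m\}$, the deletion contributing $n-1$ edges and matching the ``$+1$ full-degree vertex'' in the construction. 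Otherwise the clean-up partition of $V(H_n)\setminus(B_1\cup B_2)$ into classes $C_1,C_2,D$ by adjacency to $B_1,B_2$ (with $|D|=O(n)$, $|B_i\cup C_i|=\tfrac n2+O(\sqrt n)$, just as in Theorem~\ref{main1}) produces a bipartition $V_1\cup V_2$ in which bounded-degree and bounded-path structure hold on each side, and a version of Lemma~\ref{mainl} generalized from $m$ equal wheels to the size-multiset—together with the star/path count $\ex(n_0,\{S_{k_i+1},P_{2k_i-1}\})=\ex(n_0,S_{k_i+1})$ of Lemma~\ref{Y1}—bounds $e(H_n)$ by the conjectured maximum.

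The hard step, and the reason this remains a conjecture, is the interaction of the path condition with \emph{distinct} wheel sizes. For a single wheel the dominant side must avoid $\{S_{k+1},P_{2k-1}\}$, and crucially $\ex(n_0,\{S_{k+1},P_{2k-1}\})=\ex(n_0,S_{k+1})=\floor{\frac{(k-1)n_0}{2}}$, so paths are free of charge (Lemma~\ref{Y1}); this is what makes $m=1$ a theorem. For a union of differently sized wheels the forbidden side-family becomes $\bigcup_i S_{k_i+1}$ together with paths of several lengths, while the Lidick\'y--Liu--Palmer extremal star-forest-free graph has the form $K_{i-1}\bigotimes R$. The universal clique $K_{i-1}$, once completed to $B$, spawns many short rim-cycles and long paths, so it is not clear that the count $\ex(n_0,\bigcup_{i=1}^m S_{k_i+1})$ can be realized by a graph that is simultaneously $F$-free. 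Closing this gap amounts to showing the path constraints do not lower the achievable count below the pure star-forest Tur\'an number; I would attempt it by replacing the dense $K_{i-1}$ hub-set with a sparse, near-regular, path-bounded surrogate of the same order and edge count and verifying that the wheel count is unchanged, but I expect this reconciliation to be the genuine obstacle.
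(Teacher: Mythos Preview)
This statement is presented in the paper as a \emph{Conjecture}, not a theorem; the paper offers no proof, only the motivating observation that embedding $\bigcup_i S_{k_i+1}$ in one side of a complete bipartite graph produces $\bigcup_i W_{2k_i+1}$, together with Theorem~\ref{LIU}. So there is no ``paper's own proof'' to compare against, and your write-up is really a proof \emph{plan} rather than a proof---a distinction you yourself make explicit in the final paragraph.

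Your identification of the obstruction is accurate and is exactly why the authors leave this open. The upper-bound scheme you sketch (K\H ov\'ari--S\'os--Tur\'an to find $K_{N,N}$, a big-neighborhood claim completing the largest wheel, clean-up into $C_1,C_2,D$, and a size-multiset analogue of Lemma~\ref{mainl}) is the natural extension of the paper's method and is plausible. The genuine gap is on the \emph{lower} bound side: your construction needs a graph $G_0$ on $n_0$ vertices with $\ex\!\big(n_0,\bigcup_i S_{k_i+1}\big)$ edges such that $K_{n_0,n-n_0}+G_0+\{b_1b_2\}$ is $F$-free, but the Lidick\'y--Liu--Palmer extremal graph has the form $K_{i-1}\bigotimes R$, and each vertex of the $K_{i-1}$ becomes a hub for arbitrarily large wheels once joined to $B$. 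For $m=1$ this is harmless because Lemma~\ref{Y1} shows the path constraint $P_{2k-1}$ costs nothing on top of $S_{k+1}$; for mixed sizes there is no analogue, and replacing the clique by a ``sparse, near-regular, path-bounded surrogate of the same order and edge count'' generally loses edges. So as it stands neither the lower bound nor the upper bound is actually established, and your closing paragraph is a fair summary of what would have to be done.
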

% \limits_{1\leq i\leq m}

\end{document}